\title[Rational function semifields are finitely generated]{Rational function semifields of tropical curves are finitely generated over the tropical semifield}
\author{Song JuAe}
\address{Tokyo Metropolitan University 1-1 Minami-Ohsawa, Hachioji, Tokyo, 192-0397, Japan.}
\email{song-juae@ed.tmu.ac.jp}
\subjclass[2020]{Primary 14T10; Secondary 14T20}
\keywords{rational function semifields of tropical curves, chip firing moves on tropical curves}
\newtheorem{dfn}{Definition}[section]
\newtheorem{thm}[dfn]{Theorem}
\newtheorem{prop}[dfn]{Proposition}
\newtheorem{cor}[dfn]{Corollary}
\newtheorem{lemma}[dfn]{Lemma}
\newtheorem{rem}[dfn]{Remark}
\newtheorem{ex}[dfn]{Example}
\def\Gamma{\varGamma}
\begin{document}

\maketitle

\begin{abstract}
We prove that the rational function semifield of a tropical curve is finitely generated as a semifield over the tropical semifield $\boldsymbol{T} := ( \boldsymbol{R} \cup \{ - \infty \}, \operatorname{max}, +)$ by giving a specific finite generating set.
Also, we show that for a finite harmonic morphism between tropical curves $\varphi : \Gamma \to \Gamma^{\prime}$, the rational function semifield of $\Gamma$ is finitely generated as a $\varphi^{\ast}(\operatorname{Rat}(\Gamma^{\prime}))$-algebra, where $\varphi^{\ast}(\operatorname{Rat}(\Gamma^{\prime}))$ stands for the pull-back of the rational function semifield of $\Gamma^{\prime}$ by $\varphi$.
\end{abstract}

\section{Introduction}
	\label{introduction}

This paper gives a tropical analogue of the fact that the function field of an algebraic curve over $\boldsymbol{C}$ is generated by two elements over $\boldsymbol{C}$:

\begin{thm}
	\label{thm1}
Let $\Gamma$ be a tropical curve.
Then, the rational function semifield ${\rm Rat}(\Gamma)$ of $\Gamma$ is finitely generated as a semifield over the tropical semifield $\boldsymbol{T} := ( \boldsymbol{R} \cup \{ -\infty \}, \operatorname{max}, +)$.
\end{thm}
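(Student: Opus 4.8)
The plan is to attach to $\Gamma$ a combinatorial model, write down an explicit finite family of rational functions determined by it, and prove that this family generates $\operatorname{Rat}(\Gamma)$ by realising $\Gamma$ as a piecewise-linear subset of a Euclidean space via these functions and then extending across it. Concretely, I would first choose a vertex set $V$ of $\Gamma$ fine enough that every edge of the resulting model is the unique shortest path between its two endpoints; such a model exists (it suffices, for instance, to subdivide every cycle into at least three edges and to resolve any remaining pair of vertices joined by two distinct geodesics). Then each $\rho_v := d_\Gamma(\,\cdot\,,v)$, $v\in V$, lies in $\operatorname{Rat}(\Gamma)$ — it is continuous and piecewise linear with slope $\pm1$ on every edge — and the map $\iota := (\rho_v)_{v\in V}\colon\Gamma\to\boldsymbol{R}^{V}$ is a topological embedding (injectivity: $\rho_u$ separates $x\ne y$ when they lie on a common edge with endpoint $u$, since then $\rho_u$ restricts to the arclength coordinate; and a point with $d(\,\cdot\,,u)+d(\,\cdot\,,w)=$ the length of the edge $uw$ must lie on that edge, so points of distinct edges are separated too). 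I claim the finite set $\{\rho_v : v\in V\}\cup\boldsymbol{T}$ generates $\operatorname{Rat}(\Gamma)$; for noncompact $\Gamma$ one adjoins one further generator per leg, as noted at the end.

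The only one-dimensional input is classical: on a segment every continuous integer-slope piecewise-linear function has a finite min--max (lattice polynomial) normal form $\bigvee_i\bigwedge_j(m_{ij}t+c_{ij})$ with $m_{ij}\in\boldsymbol{Z}$, $c_{ij}\in\boldsymbol{R}$, and each affine piece is $c_{ij}\odot(\text{coordinate})^{\odot m_{ij}}$. The content of the theorem is to globalise this over $\Gamma$. Given $f\in\operatorname{Rat}(\Gamma)$, transport it to $\widehat f := f\circ\iota^{-1}$ on the compact piecewise-linear graph $\iota(\Gamma)\subset\boldsymbol{R}^{V}$; it is again continuous and piecewise linear. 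Everything then rests on the following statement, whose proof I expect to be the main obstacle.

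\textbf{Key Lemma.} $\widehat f$ extends to a continuous piecewise-linear function $\widetilde f\colon\boldsymbol{R}^{V}\to\boldsymbol{R}$ all of whose linear pieces have gradient in $\boldsymbol{Z}^{V}$.

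Granting the Key Lemma, $\widetilde f$ has a min--max normal form $\widetilde f=\bigvee_i\bigwedge_j\lambda_{ij}$ with $\lambda_{ij}(y)=\langle a_{ij},y\rangle+c_{ij}$, $a_{ij}\in\boldsymbol{Z}^{V}$, $c_{ij}\in\boldsymbol{R}$; pulling back by $\iota$ gives $\lambda_{ij}\circ\iota=c_{ij}\odot\bigodot_{v\in V}\rho_v^{\odot(a_{ij})_v}$, and since a semifield has multiplicative inverses — so that $\min$, hence $\bigwedge$, is available as $-\max(-\,\cdot\,,-\,\cdot\,)$ — we conclude $f=\widetilde f\circ\iota\in\langle\{\rho_v\}_{v\in V}\cup\boldsymbol{T}\rangle$, proving the theorem. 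To prove the Key Lemma itself I would subdivide each edge $e$ of $\Gamma$ at the breakpoints of $f$ and at the corners of the $\rho_v$, so that $\iota(\Gamma)$ becomes a union of finitely many line segments $\sigma$ on each of which $\widehat f$ is affine; the direction of $\sigma$ is $\iota'|_\sigma\in\{-1,1\}^{V}$, a primitive integer vector, and the slope of $\widehat f$ along $\sigma$ equals the (integer) slope of $f$ there, so there is $a_\sigma\in\boldsymbol{Z}^{V}$ realising it (one supported on a single coordinate of $\sigma$'s direction works). Writing $\mu_\sigma(y):=\langle a_\sigma,y\rangle+c_\sigma$ for the affine function extending $\widehat f|_\sigma$, one assembles $\widetilde f$ in a neighbourhood of $\iota(\Gamma)$ from the $\mu_\sigma$: at each point where several segments meet — a corner of an edge image, or an image vertex — all incident $\mu_\sigma$ take the common value $f$ has there, and one selects the correct piece along each outgoing segment by a suitable $\min$ or $\max$ of them, with break loci placed on integral hyperplanes transverse to $\iota(\Gamma)$; one then extends off a neighbourhood of $\iota(\Gamma)$ arbitrarily. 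The delicate part — where the geometry of $\Gamma$ genuinely enters, and where I expect the real work to lie — is making these local selections mutually consistent; here one uses that the tangent directions along $\iota(\Gamma)$ are primitive integer vectors and, after refining the model once more if necessary, are in sufficiently general position for no selection to be obstructed. (When $\Gamma$ is a single segment the Key Lemma is just the min--max normal form, the base case.) Finally, for noncompact $\Gamma$ the $\rho_v$ are bounded along each leg and so cannot produce a function tending to $-\infty$ there; one therefore adjoins, for each leg, the generator equal to minus the arclength coordinate along that leg and locally constant on the rest of $\Gamma$, taking the usual care with $\pm\infty$-values and inverses at the infinite leaves. The generating set is then finite — of size $|V|$ plus the number of legs — which is the assertion.
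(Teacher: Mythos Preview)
Your strategy is genuinely different from the paper's. The paper never embeds $\Gamma$ in a Euclidean space; it quotes a lemma that $\operatorname{Rat}(\Gamma)$ is generated as a $\odot$-group by chip firing moves, then shows by a sequence of explicit tropical identities (Lemmas~3.3--3.6, organised around Algorithm~3.1) that every chip firing move lies in the sub-semifield generated by $\operatorname{CF}(\{v\},\infty)=-\rho_v$ for $v\in V(G_\circ)$ together with three fixed chip firing moves $f_e,g_e,h_e$ supported on each edge. Your generators $\rho_v$ (over a finer model) are essentially a subset of theirs; what is new is the attempt to avoid the chip-firing calculus entirely by a min--max normal form argument.

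The route is viable, but the Key Lemma is not proved, and your proposed mechanism for the hard step does not work as stated. ``Refining the model once more'' cannot put the tangent directions at a vertex of valence $\ge 3$ into better position: subdivision adds valence-$2$ vertices but leaves the branching unchanged. What actually saves you is not general position but the specific shape of your embedding. If the model is simple (no loops or multi-edges, which your unique-geodesic hypothesis already forces after subdivision) and $u,w$ are the endpoints of an edge $e$, then the integral affine function $y\mapsto y_u+y_w-l(e)$ vanishes on $\iota(e)$ and has slope $+2$ along $\iota(\Gamma)$ leaving $e$ through either endpoint. So for a segment $\sigma\subset e$ with $\rho_u(\sigma)=[a,b]$ set
\[
\chi_\sigma \;:=\;\bigl(0\oplus \rho_u\odot\rho_w\odot(-l(e))\bigr)^{\odot(-N)}\odot\bigl(0\oplus\rho_u\odot(-b)\bigr)^{\odot(-N)}\odot\bigl(0\oplus a\odot\rho_u^{\odot(-1)}\bigr)^{\odot(-N)},
\]
which is $0$ on $\sigma$ and strictly negative on $\iota(\Gamma)\setminus\sigma$. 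With $\mu_\sigma$ your integer-gradient affine extension of $\widehat f|_\sigma$, one checks directly that for $N$ exceeding all slopes of $f$ and of the $\mu_\sigma$'s,
\[
f\;=\;\bigoplus_\sigma\bigl(\mu_\sigma\!\circ\!\iota\bigr)\odot\chi_\sigma,
\]
so $f$ lies in the semifield generated by the $\rho_v$. This is exactly the ``break loci on integral hyperplanes transverse to $\iota(\Gamma)$'' you allude to, made explicit; it bypasses the Euclidean extension problem entirely and renders the global consistency issue moot. With this replacement the argument is complete (your treatment of the infinite legs is fine). Compared to the paper, your proof is more conceptual and yields a generating set of comparable size, at the cost of a slightly larger model; the paper's approach is more hands-on but produces sharper constants and feeds directly into its later results on harmonic morphisms.
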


Here, a tropical curve is a metric graph that may have edges of length $\infty$, and a rational function on a tropical curve is a piecewise affine continuous function with integer slopes and with a finite number of pieces or a constant $-\infty$ function.
The set $\operatorname{Rat}(\Gamma)$ of all rational functions on a tropical curve $\Gamma$ has a natural  structure of a semifield over $\boldsymbol{T}$, where the addition $\oplus$ is defined as the pointwise maximum operation and the multiplication $\odot$ as the pointwise usual addition.

The following lemma is our key to prove Theorem \ref{thm1}:

\begin{lemma}[{\cite[Lemma 2.4.2]{JuAe}}]
	\label{lem:chipfiring}
Let $\Gamma$ be a tropical curve.
Then, $\operatorname{Rat}(\Gamma)$ is generated by all chip firing moves and all constant functions as a group with tropical multiplication $\odot$ as its binary operation.
\end{lemma}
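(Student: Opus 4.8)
The plan is to work inside the multiplicative group of $\operatorname{Rat}(\Gamma)$ under $\odot$ — the group of units, which consists of the real-valued rational functions, since the absorbing element ($-\infty$) and any function attaining $-\infty$ have no $\odot$-inverse, while the constant $0$ is the unit and $-f$ the inverse of $f$, so the constants already form a subgroup. Writing $G$ for the subgroup generated by all chip-firing moves together with all constant functions, I must show every real-valued $f\in\operatorname{Rat}(\Gamma)$ lies in $G$. The engine is the order $\operatorname{ord}_p(f)$, the sum of the outgoing slopes of $f$ at $p$, and the divisor $\operatorname{div}(f)=\sum_p\operatorname{ord}_p(f)\,(p)$: because $\odot$ is pointwise addition, $f\mapsto\operatorname{div}(f)$ is a group homomorphism, and by the maximum principle (the boundary of the maximum locus of a non-constant piecewise affine function carries strictly negative order) its kernel is exactly the constants. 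Hence it suffices to express $\operatorname{div}(f)$ as a $\boldsymbol{Z}$-linear combination of divisors of chip-firing moves; the remaining factor then has trivial divisor, so it is a constant and lies in $G$.

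I would obtain this by a reduction that peels $f$ from the top. Let $M=\max f$ (attained, as $f$ is real-valued and eventually constant near each infinite end) and let $U=f^{-1}(M)$, a closed set, so that the chip-firing move $f_{U,\epsilon}$ on $U$ is among the generators. Choose $\epsilon>0$ smaller than every edge length and than the distance from $U$ to the nearest break point of $f$ outside $U$, so that on each edge leaving $U$ the function is affine with integer slope $-s_e$, $s_e\ge 1$. The function $\eta:=\max(0,\epsilon-\operatorname{dist}(\cdot,U))$, which equals $\epsilon$ on $U$, descends with slope $-1$ through the collar, and vanishes beyond it, is a constant $\odot$-translate of (the inverse of) $f_{U,\epsilon}$ and so lies in $G$. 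Replacing $f$ by $f':=f\odot\eta^{-1}$ lowers the plateau $U$ by $\epsilon$ while leaving $f'=f$ away from $U$, and a direct slope computation gives $\operatorname{div}(f')=\operatorname{div}(f)-\operatorname{div}(f_{U,\epsilon})$; thus $f$ and $f'$ differ by an element of $G$, and everything reduces to seeing that $f'$ is strictly simpler.

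The hard part is precisely choosing the complexity measure that makes this terminate. The move reduces the poles of $f$ along $\partial U$ — annihilating them on edges with $s_e=1$ — but it creates new zeros at the foot of the collar, which are fresh break points at the strictly smaller value $M-s_e\epsilon$; hence the raw number of break points need not decrease, and one cannot simply induct on it. Instead I would induct on a top-down lexicographic invariant, such as the largest value carrying nonzero order followed by the total order concentrated there, and calibrate $\epsilon$ (pushing it out to the nearest existing break point so the new zeros merge with old features) to force strict descent at the highest level before descending to lower ones. Handling vertices of valence $\ge 3$ in $\partial U$, and, for tropical curves with edges of length $\infty$, the behaviour at the points at infinity — where $\operatorname{dist}$ is unbounded and the constant $-\infty$ enters — requires separate but routine bookkeeping. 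Once strict descent is established the induction terminates at a constant, giving $f\in G$ and hence the lemma.
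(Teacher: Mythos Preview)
The paper does not prove this lemma; it is quoted from \cite[Lemma~2.4.2]{JuAe}, so there is no in-paper argument to compare against. I can only assess your sketch on its own terms.

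Your reduction via $\operatorname{div}$ is clean and correct: since $f\mapsto\operatorname{div}(f)$ is a $\odot$-homomorphism with kernel the constants, it suffices to realise $\operatorname{div}(f)$ as a $\boldsymbol{Z}$-combination of divisors of chip-firing moves. The peeling step is also set up correctly, and you are right that the only substantive issue is termination.

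The gap is that the termination invariant you propose is not well-founded. Your lexicographic pair ``largest value carrying nonzero order, then total order there'' has a \emph{real number} in the first slot, and your procedure does not confine that value to a fixed finite set. Your suggested fix---push $\epsilon$ out to the nearest existing break point so the new collar-foot zero merges with an old feature---works on at most one outgoing edge at a time: if the edges leaving $U$ meet their next break points at different distances, a single $\operatorname{CF}(U,\epsilon)$ cannot align them all, so new break points at fresh real values are still created on the other edges. One can check in small examples (e.g.\ a peak with unequal slopes on the two sides) that neither the number of break points, nor $\sum_p|\operatorname{ord}_p(f)|$, nor your lexicographic pair is forced to strictly decrease after a peel; the naive counts stay constant and the ``largest value'' merely slides down by $\epsilon$. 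So as written the induction does not terminate.

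What does work is a direct finite decomposition rather than an open-ended induction: take a model on which $f$ is affine on every edge, list the finitely many values $c_0<\cdots<c_n$ of $f$ at the vertices, and between consecutive levels $c_{i-1},c_i$ peel repeatedly with $\epsilon=(c_i-c_{i-1})/s_{\max}^{(i)}$ using the super-level sets $f^{-1}([c,\infty))$; the number of peels within each band is bounded by the finite list of integer slopes there, and no values outside $\{c_0,\dots,c_n\}$ need ever appear at the top. Alternatively, replace your real-valued first coordinate by an integer (for instance, the number of critical values together with the multiset of outgoing slopes at the top level) so that well-foundedness is automatic. Either route completes your argument; the point is that ``calibrate $\epsilon$'' has to be made uniform across all outgoing edges, and that requires organising the peels band by band rather than one $\epsilon$ at a time.
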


Here, a chip firing move $\operatorname{CF}(\Gamma_1, l)$ is the rational function defined by the pair of a subgraph $\Gamma_1$ and a number $l \in \boldsymbol{R}_{>0} \cup \{ \infty \}$ as follows: $\operatorname{CF}(\Gamma_1, l) (x) := - \operatorname{min} \{ \operatorname{dist}(\Gamma_1, x), l \}$, where $\Gamma_1$ has no connected components consisting only of a point at infinity and $\operatorname{dist}(\Gamma_1, x)$ denotes the distance between $\Gamma_1$ and $x$.
By this lemma, it is enough to find a finite set of rational functions which generates all chip firing moves as a semifield over $\boldsymbol{T}$.

The following example suggests that all chip firing moves defined by one point generate all chip firing moves.
Here, the valence $\operatorname{val}(x)$ of a point $x$ of a tropical curve is the minimum number of the connceted components $U \setminus \{ x \}$ with all neighborhoods $U$ of $x$.

\begin{ex}
	\label{ex1}
\upshape{
Let $\Gamma$ be a tropical curve.
Let $\Gamma_1$ be a proper subgraph of $\Gamma$ that has no connected components consisting only of a point at infinity.
For any $l \in \boldsymbol{R} \cup \{ \infty \}$ and any $l^{\prime}$ such that $0 < l^{\prime} \le l$, we can \textit{cut} the bottom side of the chip firing move $\operatorname{CF}(\Gamma_1, l)$:
\[
\operatorname{CF}(\Gamma_1, l^{\prime}) = \operatorname{CF}(\Gamma_1, l) \oplus (-l^{\prime}).
\]

Let $\Gamma_2 := \{ x \in \Gamma \,|\, \operatorname{dist}(\Gamma_1, x) \le l^{\prime} \}$.
We can \textit{cut} the top side of the chip firing move $\operatorname{CF}(\Gamma_1, l)$:
\[
\operatorname{CF}(\Gamma_2, l - l^{\prime}) = \left\{ \operatorname{CF}(\Gamma_1, l)^{\odot (-1)} \oplus l^{\prime} \right\}^{\odot (-1)} \odot l^{\prime}.
\]

We can \textit{extend} the chip firing move $\operatorname{CF}(\Gamma_1, l^{\prime})$:
\[
\operatorname{CF}(\Gamma_1, l) = \operatorname{CF}(\Gamma_1, l^{\prime}) \odot \operatorname{CF}(\Gamma_2, l - l^{\prime}).
\]

Let $x$ be a boundary point of $\Gamma_1$ in $\Gamma$.
Let $\varepsilon$ be a sufficiently small positive real number and $y \in \Gamma \setminus \Gamma_1$ such that $\operatorname{dist}(x, y) = \varepsilon$.
Then, we can \textit{connect} two chip firing moves $\operatorname{CF}(\Gamma_1, \varepsilon)$ and $\operatorname{CF}(\{ y \}, \varepsilon)$:
\[
\operatorname{CF}(\Gamma_1 \cup [x, y], \varepsilon) = \operatorname{CF}(\Gamma_1, \varepsilon) \odot \operatorname{CF}(\{ y \}, \varepsilon) \odot \varepsilon.
\]

Let $\Gamma_3, \Gamma_4$ be any two proper subgraphs of $\Gamma$ whose intersection is empty and both that have no connected components consisting only of a point at infinity.
Let $l$ be a positive real number such that the intersection of $\{ x \in \Gamma \,|\, \operatorname{dist}(\Gamma_3, x) \le l \}$ and $\{ x \in \Gamma \,|\, \operatorname{dist}(\Gamma_4, x) \le l \}$ is finite.
Then, we have
\[
\operatorname{CF}(\Gamma_3 \sqcup \Gamma_4, l) = \operatorname{CF}(\Gamma_3, l) \oplus \operatorname{CF}(\Gamma_4, l).
\]
}
\end{ex}

Note that Algorithm \ref{alg:connection} in Section \ref{section:main results} gives a range of ``$\varepsilon$ is sufficiently small".
If $\Gamma$ has no edges of length $\infty$, then in fact all chip firing moves defined by one point generate all chip firing moves.
By the following lemma, we may assume that $\Gamma$ has no edges of length $\infty$.

\begin{lemma}
	\label{lem:troptometric}
Let $\Gamma$ be a tropical curve.
Let $\Gamma^{\prime}$ be a tropical curve which is obtained from $\Gamma$ by contracting edges of length $\infty$. 
If $\operatorname{Rat}(\Gamma^{\prime})$ is finitely generated as a semifield over $\boldsymbol{T}$, then so is $\operatorname{Rat}(\Gamma)$.
\end{lemma}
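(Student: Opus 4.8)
The plan is to use the retraction of $\Gamma$ onto its finite part. Write $\Gamma = \Gamma^{\mathrm{fin}} \cup e_1 \cup \dots \cup e_s$, where $\Gamma^{\mathrm{fin}}$ is the union of the edges of finite length and $e_1,\dots,e_s$ are the edges of length $\infty$; each $e_j$ is isometric to $[0,\infty]$ and meets $\Gamma^{\mathrm{fin}}$ in a single point $q_j$. Contracting the $e_j$'s identifies $\Gamma^{\mathrm{fin}}$ with $\Gamma'$, so we regard $\Gamma' = \Gamma^{\mathrm{fin}} \subseteq \Gamma$ and let $\rho \colon \Gamma \to \Gamma'$ be the retraction equal to the identity on $\Gamma'$ and collapsing each $e_j$ to $q_j$. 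Then $\rho^{\ast} \colon \operatorname{Rat}(\Gamma') \to \operatorname{Rat}(\Gamma)$, $f \mapsto f \circ \rho$, is an injective homomorphism of semifields over $\boldsymbol{T}$ whose image is the set of rational functions on $\Gamma$ that are constant on every $e_j$.

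Fix a finite generating set $f_1,\dots,f_r$ of $\operatorname{Rat}(\Gamma')$ over $\boldsymbol{T}$ (it exists by hypothesis), and for each $j$ put $G_j := \operatorname{CF}(\Gamma \setminus e_j^{\circ},\infty) \in \operatorname{Rat}(\Gamma)$, where $e_j^{\circ} := e_j \setminus \{q_j\}$; thus $G_j \equiv 0$ on $\Gamma \setminus e_j^{\circ}$ and $G_j = -\operatorname{dist}(q_j,\cdot)$ on $e_j$. The goal is to show that the finite set $S := \{\rho^{\ast}f_1,\dots,\rho^{\ast}f_r,\, G_1,\dots,G_s\}$ generates $\operatorname{Rat}(\Gamma)$ over $\boldsymbol{T}$ (more generally one may replace each $G_j$ by a finite family of chip firing moves vanishing outside $e_j^{\circ}$, provided their restrictions to $e_j$ generate $\operatorname{Rat}(e_j)$ over $\boldsymbol{T}$).

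Let $h \in \operatorname{Rat}(\Gamma)$; as $-\infty \in \boldsymbol{T}$ we may assume $h \neq -\infty$. Then $\bar h := h|_{\Gamma'} \in \operatorname{Rat}(\Gamma')$ is a semifield expression over $\boldsymbol{T}$ in $f_1,\dots,f_r$, and since $\rho^{\ast}$ is a $\boldsymbol{T}$-homomorphism, $\rho^{\ast}\bar h$ is the same expression in $\rho^{\ast}f_1,\dots,\rho^{\ast}f_r$; it agrees with $h$ on $\Gamma'$ and is the constant $h(q_j)$ on each $e_j$. Set $\psi_j := h|_{e_j} \odot (h(q_j))^{\odot(-1)} \in \operatorname{Rat}(e_j)$, a rational function on $e_j \cong [0,\infty]$ that vanishes at $q_j$. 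Since $G_j \equiv 0$ on $\Gamma \setminus e_j^{\circ}$ and $G_j(q_j) = 0$, every semifield expression over $\boldsymbol{T}$ in $G_j$ that vanishes at $q_j$ vanishes on all of $\Gamma \setminus e_j^{\circ}$; so, writing $\psi_j$ as such an expression in $G_j|_{e_j}$, the same expression in $G_j$ defines $\Psi_j \in \langle S\rangle$ with $\Psi_j|_{e_j} = \psi_j$ and $\Psi_j \equiv 0$ off $e_j^{\circ}$. Then
\[
h = \rho^{\ast}\bar h \odot \Psi_1 \odot \dots \odot \Psi_s,
\]
since both sides equal $\bar h = h$ on $\Gamma'$ and equal $h(q_j) \odot \psi_j = h|_{e_j}$ on each $e_j$ (the remaining $\Psi_k$ vanishing there). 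Hence $h \in \langle S\rangle$. (One could equally invoke Lemma \ref{lem:chipfiring} and only check that every chip firing move of $\Gamma$ lies in $\langle S\rangle$, which comes down to the same computation on the legs.)

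What is left is to write each $\psi_j$ as a semifield expression over $\boldsymbol{T}$ in $G_j|_{e_j}$ — equivalently, to prove that $\operatorname{Rat}([0,\infty])$ is finitely generated over $\boldsymbol{T}$. This is exactly the special case of the lemma in which $\Gamma$ is a single edge of length $\infty$, and it is genuinely \emph{not} an instance of the ``no edges of length $\infty$'' situation used elsewhere in the paper, so I expect it to be the main — if modest — point to settle. I would prove it directly, decomposing a piecewise affine function on $[0,\infty]$ into its finitely many monotone pieces and rebuilding it from $\operatorname{CF}(\{0\},\infty)$, its multiplicative inverse, and constants by means of the cut, extend and connect identities of Example \ref{ex1}, treating $[0,N]$ as an ordinary segment and the tail $[N,\infty]$ as a single affine ray. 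Some care is needed with the convention for the value $-\infty$ at the point at infinity; replacing the cap $\infty$ by $1$ in the definition of the $G_j$ removes $-\infty$ from the picture, at the cost of one extra appeal to the bounded-segment case.
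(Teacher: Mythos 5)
Your proposal is correct and follows essentially the same route as the paper's proof: regard $\Gamma'$ as the subgraph of $\Gamma$ obtained by removing the open infinite legs, extend a finite generating set of $\operatorname{Rat}(\Gamma')$ to $\Gamma$ by constancy on the legs, adjoin one chip firing move $\operatorname{CF}(\overline{\Gamma\setminus L_i},\infty)$ per leg, and rebuild an arbitrary $h$ leg by leg; the one step you defer (that every rational function on $[0,\infty]$ vanishing at $0$ is a $\boldsymbol{T}$-expression in $-\operatorname{dist}(0,\cdot)$) is exactly what the paper makes explicit, writing the restriction to each leg as a product over its affine pieces of top-and-bottom-truncated copies of $\operatorname{CF}(\overline{\Gamma\setminus L_i},\infty)$ raised to the powers $-a_{ij}$. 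One caution: your parenthetical alternative of replacing the cap $\infty$ by $1$ in the $G_j$ would not work, because every element of the semifield generated over $\boldsymbol{T}$ by capped chip firing moves is eventually constant on each leg, so rational functions with nonzero asymptotic slope (value $\pm\infty$ at the point at infinity) could not be reached; keep the cap at $\infty$ as in your main argument and handle the value $-\infty$ at the point at infinity by the continuity conventions already built into $\oplus$ and $\odot$.
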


Hence our next target is to find a finite set of rational functions which generates all chip firing moves defined by one point as a semifield over $\boldsymbol{T}$.
Let $(G_{\circ}, l_{\circ})$ be the canonical model for $\Gamma$, i.e., the pair of the underlying graph $G_{\circ}$ of $\Gamma$ whose set $V(G_{\circ})$ of vertices is $\{ x \in \Gamma \,|\, \operatorname{val}(x) \not= 2 \}$ and the length function $l_{\circ}$ defined by $\Gamma$ and $G_{\circ}$ (for more precisely, see Subsection \ref{subsection2.2}).
Fix a direction on edges of $G_{\circ}$.
Let each $e \in E(G_{\circ})$ be identified with the interval $[0, l_{\circ}(e)]$ with this direction, where $E(G_{\circ})$ denotes the set of edges of $G_{\circ}$.
For each edge $e \in E(G_{\circ})$, let $x_e = \frac{l_{\circ}(e)}{4}$, $y_e = \frac{l_{\circ}(e)}{2}$, and $z_e = \frac{3 l_{\circ}(e)}{4}$.
We define rational functions 
\[
f_e := \operatorname{CF} \left( \{ y_e \}, \frac{l_{\circ}(e)}{2} \right),
g_e := \operatorname{CF} \left( \{ x_e \}, \frac{l_{\circ}(e)}{4} \right), 
h_e := \operatorname{CF} \left( \{ z_e \}, \frac{l_{\circ}(e)}{4} \right).
\]
Note that the semifield generated by $g_e, h_e$ over $\boldsymbol{T}$ coincides with the semifield generated by $g_e \odot h_e^{\odot (-1)}$ over $\boldsymbol{T}$ since
\[
g_e = \left( - \frac{l_{\circ}(e)}{4} \right) \odot \left( g_e \odot h_e^{\odot (-1)} \oplus 0 \right)
\]
and
\[
h_e = \left( - \frac{l_{\circ}(e)}{4} \right) \odot \left\{ \left( g_e \odot h_e^{\odot (-1)} \right)^{\odot (-1)} \oplus 0 \right\}.
\]
Let $R$ be the semifield generated by $f_e$, $g_e$, $h_e$ for any $e \in E(G_{\circ})$ and $\operatorname{CF}(\{ v \}, \infty) (= -\operatorname{dist}(v, \cdot))$ for any $v \in V(G_{\circ})$ over $\boldsymbol{T}$.
This semifield $R$ is finitely generated, and in fact, coincides with $\operatorname{Rat}(\Gamma)$.
Hence, we have Theorem \ref{thm1}.

In the setting that a finite harmonic morphism between tropical curves is given, we have the following proposition:

\begin{prop}
	\label{prop:morphismsemiring}
Let $\varphi : \Gamma \to \Gamma^{\prime}$ be a finite harmonic morphism between tropical curves.
Then, $\operatorname{Rat}(\Gamma)$ is finitely generated as a $\varphi^{\ast}(\operatorname{Rat}(\Gamma^{\prime}))$-algebra, where $\varphi^{\ast}(\operatorname{Rat}(\Gamma^{\prime}))$ stands for the pull-back of $\operatorname{Rat}(\Gamma^{\prime})$ by $\varphi$.
\end{prop}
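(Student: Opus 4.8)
The plan is to deduce the proposition directly from Theorem \ref{thm1}, using only that the coefficient semifield $\varphi^{\ast}(\operatorname{Rat}(\Gamma^{\prime}))$ always contains the tropical semifield $\boldsymbol{T}$. Concretely, any finite set that generates $\operatorname{Rat}(\Gamma)$ as a semifield over $\boldsymbol{T}$ will \emph{a fortiori} generate $\operatorname{Rat}(\Gamma)$ as a semifield over the larger semifield $\varphi^{\ast}(\operatorname{Rat}(\Gamma^{\prime}))$, i.e.\ as a $\varphi^{\ast}(\operatorname{Rat}(\Gamma^{\prime}))$-algebra, so the whole content is carried by Theorem \ref{thm1}.

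First I would verify that $\varphi^{\ast} \colon \operatorname{Rat}(\Gamma^{\prime}) \to \operatorname{Rat}(\Gamma)$, $f \mapsto f \circ \varphi$, is a well-defined homomorphism of semifields, so that its image $\varphi^{\ast}(\operatorname{Rat}(\Gamma^{\prime}))$ is a sub-semifield of $\operatorname{Rat}(\Gamma)$. Since $\varphi$ is finite and harmonic, on each edge it is affine with a positive integer dilation factor; hence, for a piecewise affine continuous function $f$ on $\Gamma^{\prime}$ with integer slopes and finitely many pieces, the composite $f \circ \varphi$ is piecewise affine and continuous, its slopes are products of slopes of $f$ with dilation factors of $\varphi$ and thus are integers, and it has only finitely many pieces because $\varphi$ is finite-to-one and only finitely many edges are involved; the constant $-\infty$ pulls back to $-\infty$. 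Moreover composition with $\varphi$ commutes with pointwise maximum and pointwise addition and sends $-f$ to $-(f \circ \varphi)$, so $\varphi^{\ast}$ respects $\oplus$, $\odot$ and $\odot$-inverses. In particular every constant $c \in \boldsymbol{T}$ on $\Gamma^{\prime}$ pulls back to the constant $c$ on $\Gamma$, so $\boldsymbol{T} \subseteq \varphi^{\ast}(\operatorname{Rat}(\Gamma^{\prime}))$.

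Now I would invoke Theorem \ref{thm1}: there is a finite subset $S \subseteq \operatorname{Rat}(\Gamma)$ such that $\operatorname{Rat}(\Gamma)$ is generated as a semifield by $\boldsymbol{T} \cup S$ — one may take for $S$ the explicit set $\{ f_e, g_e, h_e \mid e \in E(G_{\circ}) \} \cup \{ \operatorname{CF}(\{ v \}, \infty) \mid v \in V(G_{\circ}) \}$ of the discussion preceding the proposition when $\Gamma$ has no edges of length $\infty$, and otherwise the finite set obtained from it via the reduction of Lemma \ref{lem:troptometric}. Since $\boldsymbol{T} \cup S \subseteq \varphi^{\ast}(\operatorname{Rat}(\Gamma^{\prime})) \cup S$, the sub-semifield of $\operatorname{Rat}(\Gamma)$ generated by $\varphi^{\ast}(\operatorname{Rat}(\Gamma^{\prime})) \cup S$ contains the one generated by $\boldsymbol{T} \cup S$, hence equals $\operatorname{Rat}(\Gamma)$; thus $S$ is a finite generating set of $\operatorname{Rat}(\Gamma)$ as a $\varphi^{\ast}(\operatorname{Rat}(\Gamma^{\prime}))$-algebra. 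The only step that is not purely formal is the well-definedness of $\varphi^{\ast}$ on rational functions, where the finiteness and harmonicity of $\varphi$ enter; this is routine and essentially contained in the standard theory of morphisms of tropical curves, so I expect no genuine obstacle beyond this bookkeeping.
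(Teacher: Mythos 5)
There is a genuine gap, and it is located precisely at the ``i.e.''\ in your first paragraph: being finitely generated \emph{as a semifield over} $\varphi^{\ast}(\operatorname{Rat}(\Gamma^{\prime}))$ is not the same as being finitely generated \emph{as a $\varphi^{\ast}(\operatorname{Rat}(\Gamma^{\prime}))$-algebra}. In this paper an $S$-algebra is a semiring equipped with a homomorphism from $S$ (Subsection \ref{subsection2.1}), so ``finitely generated as an $S$-algebra'' means every element is obtained from the generators and the image of $S$ using only $\oplus$ and $\odot$ --- no $\odot$-inverses of the generators are allowed --- whereas generation as a semifield does permit inverting generated elements. The two notions genuinely diverge here: Theorem \ref{thm1} says $\operatorname{Rat}(\Gamma)$ is finitely generated as a semifield over $\boldsymbol{T}$, while Proposition \ref{prop:tropsemiring} (via the pole-propagation Lemma \ref{lem:poly}) says it is \emph{never} finitely generated as a $\boldsymbol{T}$-algebra unless $\Gamma$ is a point. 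Your a fortiori argument is sound for the weaker statement and is exactly the paper's Corollary \ref{cor1} (``finitely generated as a semifield over $\varphi^{\ast}(\operatorname{Rat}(\Gamma^{\prime}))$''), but it does not touch the algebra statement of Proposition \ref{prop:morphismsemiring}.

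The missing content is to show that the coefficient semiring $\varphi^{\ast}(\operatorname{Rat}(\Gamma^{\prime}))$ is rich enough --- unlike $\boldsymbol{T}$, it contains functions with poles at essentially arbitrary points of $\Gamma$ --- that inverses of the finitely many generators can be dispensed with. The paper's proof is a substantial explicit computation: it introduces generators $F_e$, $G_{v,e}$ and $\operatorname{CF}(\{v\},\infty)$ attached to the edges and vertices of a loopless model, and writes each chip firing move $\operatorname{CF}(\{x\}, l_x)$ as an $\oplus$/$\odot$ combination (with nonnegative $\odot$-exponents on the generators) of these together with pull-backs $\varphi^{\ast}(f^{\prime})$ of carefully chosen $f^{\prime} \in \operatorname{Rat}(\Gamma^{\prime})$; the pull-backs supply the negative slopes that constants cannot, and even the needed inverses such as $g_{[x,y]}^{\odot(-1)}$ are re-expressed as positive combinations (e.g.\ $g_{[x,w]}^{\odot(-1)} = F_e \odot g_{[v,x]}$). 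None of this is recoverable from Theorem \ref{thm1} by the formal containment $\boldsymbol{T} \subseteq \varphi^{\ast}(\operatorname{Rat}(\Gamma^{\prime}))$ alone.
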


Note that $\operatorname{Rat}(\Gamma)$ may not be finitely generated as a $\varphi^{\ast}(\operatorname{Rat}(\Gamma^{\prime}))$-module.
See Example \ref{ex2}.

This paper is organized as follows.
In Section $2$, we give basic definitions related to semirings and tropical curves which we need later.
Section $3$ gives proofs of Theorem \ref{thm1}, Lemma \ref{lem:troptometric}, and Proposition \ref{prop:morphismsemiring}.
In that section, we also show that there exists a generating set of rational function semifield of any tree whose elements are fewer than that of the above generating set and that rational function semifields of tropical curves other than a singleton are not finitely generated as a $\boldsymbol{T}$-algebra.

\section*{Acknowledgements}
The author thanks my supervisor Masanori Kobayashi, Yuki Kageyama, Yasuhito Nakajima, Ken Sumi, and Daichi Miura for helpful comments.
This work was supported by JSPS KAKENHI Grant Number 20J11910.

\section{Preliminaries}

In this section, we prepare basic definitions related to semirings and tropical curves which we need later.
For an introduction to the theory of tropical geometry, for example, see \cite{Maclagan=Sturmfels}.
We employ definitions in \cite{JuAe} for tropical curves.

\subsection{Semirings}
	\label{subsection2.1}
 
In this paper, a \textit{semiring} is a commutative semiring with the absorbing neutral element $0$ for addition and the identity $1$ for multiplication such that $0 \not= 1$.
If every nonzero element of a semiring $S$ is multiplicatively invertible, then $S$ is called a \textit{semifield}.

A map $\varphi : S_1 \to S_2$ between semirings is a \textit{semiring homomorphism} if for any $x, y \in S_1$,
\[
\varphi(x + y) = \varphi(x) + \varphi(y), \	\varphi(x \cdot y) = \varphi(x) \cdot \varphi(y), \	\varphi(0) = 0, \	\text{and}\	\varphi(1) = 1.
\]
Given a semiring homomorphism $\varphi : S_1 \to S_2$, we call the pair $(S_2, \varphi)$ (for short, $S_2$) a \textit{$S_1$-algebra}.

The set $\boldsymbol{T} := \boldsymbol{R} \cup \{ -\infty \}$ with two tropical operations:
\[
a \oplus b := \operatorname{max}\{ a, b \} \	\text{and} \	a \odot b := a + b,
\]
where both $a$ and $b$ are in $\boldsymbol{T}$, becomes a semifield.
Here, for any $a \in \boldsymbol{T}$, we handle $-\infty$ as follows:
\[
a \oplus (-\infty) = (-\infty) \oplus a = a \	\text{and}\	a \odot (-\infty) = (-\infty) \odot a = -\infty.
\]
$\boldsymbol{T}$ is called the \textit{tropical semifield}.

\subsection{Tropical curves}
	\label{subsection2.2}

In this paper, a \textit{graph} is an unweighted, undirected, finite, connected nonempty multigraph that may have loops.
For a graph $G$, the set of vertices is denoted by $V(G)$ and the set of edges by $E(G)$.
The \textit{valence} of a vertex $v$ of $G$ is the number of edges incident to $v$, where each loop is counted twice.
A vertex $v$ of $G$ is a \textit{leaf end} if $v$ has valence one.
A \textit{leaf edge} is an edge of $G$ incident to a leaf end.

An \textit{edge-weighted graph} $(G, l)$ is the pair of a graph $G$ and a function $l: E(G) \to {\boldsymbol{R}}_{>0} \cup \{\infty\}$, where $l$ can take the value $\infty$ on only leaf edges.
A \textit{tropical curve} is the underlying topological space of an edge-weighted graph $(G, l)$ together with an identification of each edge $e$ of $G$ with the closed interval $[0, l(e)]$.
The interval $[0, \infty]$ is the one point compactification of the interval $[0, \infty)$.
We regard $[0, \infty]$ not just as a topological space but as almost a metric space.
The distance between $\infty$ and any other point is infinite.
When $l(e)=\infty$, the leaf end of $e$ must be identified with $\infty$.
If $E(G) = \{ e \}$ and $l(e)=\infty$, then we can identify either leaf ends of $e$ with $\infty$.
When a tropical curve $\Gamma$ is obtained from $(G, l)$, the edge-weighted graph $(G, l)$ is called a \textit{model} for $\Gamma$.
There are many possible models for $\Gamma$.
A model $(G, l)$ is \textit{loopless} if $G$ is loopless.
We frequently identify a vertex (resp. an edge) of $G$ with the corresponding point (resp. the corresponding closed subset) of $\Gamma$.
For a point $x$ on a tropical curve $\Gamma$ obtained from $(G, l)$, if $x$ is identified with $\infty$, then $x$ is called a \textit{point at infinity}, else, $x$ is called a \textit{finite point}.
$\Gamma_{\infty}$ denotes the set of all points at infinity of $\Gamma$.
If $\Gamma_{\infty}$ is empty, i.e. $l : E(G) \to \boldsymbol{R}_{>0}$, then $\Gamma$ is called a \textit{metric graph}.
If $x$ is a finite point, then the \textit{valence} $\operatorname{val}(x)$ is the number of connected components of $U \setminus \{ x \}$ with any sufficiently small connected neighborhood $U$ of $x$, if $x$ is a point at infinity, then $\operatorname{val}(x) := 1$.
Remark that this ``valence" is defined for a point of a tropical curve and the ``valence" in the first paragraph of this subsection is defined for a vertex of a graph, and these are compatible with each other.
We construct a model $(G_{\circ}, l_{\circ})$ called the \textit{canonical model} for $\Gamma$ as follows.
We determine $V(G_{\circ}) := \{ x \in \Gamma \,|\, \operatorname{val}(x) \not= 2 \}$ except following two cases.
When $\Gamma$ is homeomorphic to a circle $S^1$, we determine $V(G_{\circ})$ as the set consisting of one arbitrary point of $\Gamma$.
When $\Gamma$ has the edge-weighted graph $(T, l)$ as its model, where $T$ is the tree consisting of three vertices and two edges and $l(E(T)) = \{ \infty \}$, we determine $V(G_{\circ})$ as the set of two points at infinity and any finite point of $\Gamma$.
The \textit{relative interior} $e^{\circ}$ of an edge $e$ is $e \setminus \{v, w\}$ with the endpoint(s) $v, w$ of $e$.
The \textit{genus} $g(\Gamma)$ of $\Gamma$ is the first Betti number of $\Gamma$, which coincides with $\# E(G) - \# V(G) + 1$ for any model $(G, l)$ for $\Gamma$.
A \textit{tree} is a tropical curve of genus zero.
The word ``an edge of $\Gamma$" means an edge of $G$ with some model $(G, l)$ for $\Gamma$.

\subsection{Rational functions and chip firing moves}
	\label{subsection2.3}

Let $\Gamma$ be a tropical curve.
A continuous map $f : \Gamma \to \boldsymbol{R} \cup \{ \pm \infty \}$ is a \textit{rational function} on $\Gamma$ if $f$ is a piecewise affine function with integer slopes, with a finite number of pieces and that can take the value $\pm \infty$ only at points at infinity, or a constant function of $-\infty$.
For a point $x$ of $\Gamma$ and a rational function $f \in \operatorname{Rat}(\Gamma) \setminus \{ -\infty \}$, $x$ is a \textit{pole} of $f$ if the sign of the sum of outgoing slopes of $f$ at $x$ is minus.
The absolute value of the sum is its degree.
Let $\operatorname{Rat}(\Gamma)$ denote the set of all rational functions on $\Gamma$.
For rational functions $f, g \in \operatorname{Rat}(\Gamma)$ and a point $x \in \Gamma \setminus \Gamma_{\infty}$, we define
\[
(f \oplus g) (x) := \operatorname{max}\{f(x), g(x)\}~~~\text{and}~~~(f \odot g) (x) := f(x) + g(x).
\]
We extend $f \oplus g$ and $f \odot g$ to points at infinity to be continuous on whole $\Gamma$.
Then both are rational functions on $\Gamma$.
Note that for any $f \in \operatorname{Rat}(\Gamma)$, $f \odot (-\infty) = (-\infty) \odot f = -\infty$.
Then $\operatorname{Rat}(\Gamma)$ becomes a semifield with these two operations.
Also, $\operatorname{Rat}(\Gamma)$ becomes a $\boldsymbol{T}$-algebra with the natural inclusion $\boldsymbol{T} \hookrightarrow \operatorname{Rat}(\Gamma)$.
Note that for $f, g \in \operatorname{Rat}(\Gamma)$, $f = g$ means that $f(x) = g(x)$ for any $x \in \Gamma$.

A \textit{subgraph} of a tropical curve is a compact nonempty subset of the tropical curve with a finite number of connected components.
Let $\Gamma_1$ be a subgraph of a tropical curve $\Gamma$ which does not have any connected components consisting of only points at infinity and $l$ a positive real number or infinity.
The \textit{chip firing move} by $\Gamma_1$ and $l$ is defined as the rational function $\operatorname{CF}(\Gamma_1, l)(x) := - \operatorname{min}(\operatorname{dist}(x, \Gamma_1), l)$.

\subsection{Finite harmonic morphisms}
	\label{subsection2.4}

Let $\varphi : \Gamma \to \Gamma^{\prime}$ be a continuous map between tropical curves.
$\varphi$ is a \textit{finite harmonic morphism} if there exist loopless models $(G, l)$ and $(G^{\prime}, l^{\prime})$ for $\Gamma$ and $\Gamma^{\prime}$, respectively, such that $(1)$ $\varphi(V(G)) \subset V(G^{\prime})$ holds, $(2)$ $\varphi(E(G)) \subset E(G^{\prime})$ holds, $(3)$ for any edge $e$ of $G$, there exists a positive integer $\operatorname{deg}_e(\varphi)$ such that for any points $x, y$ of $e$, $\operatorname{dist}(\varphi (x), \varphi (y)) = \operatorname{deg}_e(\varphi) \cdot \operatorname{dist}(x, y)$ holds, and $(4)$ for every vertex $v$ of $G$, the sum $\sum_{e \in E(G):\, e \mapsto e^{\prime},\, v \in e} \operatorname{deg}_e(\varphi)$ is independent of the choice of $e^{\prime} \in E(G^{\prime})$ incident to $\varphi(v)$.
This sum is denoted by $\operatorname{deg}_v(\varphi)$.
Then, the sum $\sum_{v \in V(G):\, v \mapsto v^{\prime}} \operatorname{deg}_v(\varphi)$ is independent of the choice of a vertex $v^{\prime}$ of $G^{\prime}$, and is called the \textit{degree} of $\varphi$.
If both $\Gamma$ and $\Gamma^{\prime}$ are singletons, we regard $\varphi$ as a finite harmonic morphism that can have any number as its degree.

Let $\varphi : \Gamma \to \Gamma^{\prime}$ be a finite harmonic morphism between tropical curves.
The \textit{pull-back} $\varphi^{\ast} : \operatorname{Rat}(\Gamma^{\prime}) \to \operatorname{Rat}(\Gamma)$ is defined by $f^{\prime} \mapsto f^{\prime} \circ \varphi$.
Note that on each $e \in E(G)$ with the model $(G, l)$ above, $\varphi^{\ast}(f^{\prime})$ has only multiples of $\operatorname{deg}_e(\varphi)$ as its slopes for any $f^{\prime} \in \operatorname{Rat}(\Gamma^{\prime})$.

\section{Main results}
	\label{section:main results}

In this section, we give proofs of Theorem \ref{thm1} and Proposition \ref{prop:morphismsemiring}.

First, we prove Theorem \ref{thm1}.
To do it, we will prepare multiple lemmas and an algorithm.
Algorithm \ref{alg:connection} gives a range of values for a proper connected subgraph of a metric graph to connect the chip firing move defined by it and another chip firing move  (see Example \ref{ex1}).

\begin{algorithm}[H]
\caption{}
\label{alg:connection}
\begin{algorithmic}[1]
\renewcommand{\algorithmicrequire}{\textbf{Input:}}
\renewcommand{\algorithmicensure}{\textbf{Output:}}
\REQUIRE 
$\Gamma :$ a metric graph

$E(G_{\circ}) = \{ e_1, \ldots, e_n \}:$ a labeling of edges of the canonical model for $\Gamma$

$S:$ a proper connected subgraph of $\Gamma$

\ENSURE $l_S$

\STATE $i \leftarrow 1$

\WHILE{$i \le n$}

\IF{$e_i \cap S = \varnothing$}

\STATE $l_i \leftarrow (\text{the diameter of }\Gamma), i \leftarrow i + 1$

\ELSE[$S \supset e_i$]

\STATE $l_i \leftarrow (\text{the diameter of }\Gamma), i \leftarrow i + 1$

\ELSE[$S \supset \partial e_i$]

\STATE $l_i \leftarrow (\text{the length of }\overline{e_i \setminus S}) / 2, i \leftarrow i +1$

\ELSE[$S \subset e_i^{\circ}$]

\STATE $l_i \leftarrow \operatorname{min} \{ \operatorname{dist}( S, x) \,|\, x \text{ is one of the endpoints of } e_i \}, i \leftarrow i + 1$

\ELSE

\STATE $l_i \leftarrow (\text{the length of }\overline{e_i \setminus S}), i \leftarrow i + 1$

\ENDIF

\ENDWHILE

\STATE $l_S \leftarrow \operatorname{min} \{ l_1, \ldots, l_n \}$

\RETURN $l_S$
\end{algorithmic}
\end{algorithm}

In Algorithm \ref{alg:connection}, $\overline{e_i \setminus S}$ denotes the closure of $e_i \setminus S$, and if $S$ consists of only one point $x$, then we write $l_x$ instead of $l_{\{ x \}}$.

\begin{rem}
	\label{rem:connection}
\upshape{
Let $\Gamma$ be a metric graph and $S_1$ a proper connceted subgraph of $\Gamma$.
Let $l \le l_{S_1}$ and $S_2 := \{ x \in \Gamma \,|\, \operatorname{dist}(S_1, x) \le l \}$.
With $a := \operatorname{min}\{ k \in \boldsymbol{Z}_{>0} \,|\, l/k \le l_{S_2} \}$, $m := \operatorname{min}\{ k \in \boldsymbol{Z}_{>0} \,|\, l_{S_2} / k \le l / a \}$ and any $l^{\prime} > 0$, by the definition of chip firing moves, we have
\begin{eqnarray*}
\operatorname{CF} \left( S_2, \frac{l}{a} \right) &=& \operatorname{CF} \left( S_1, \frac{l}{a} \right)\\
&& \odot \bigodot_{k = 1}^a \bigodot_{\substack{x^{\prime} \in \Gamma: \\ \operatorname{dist}(S_1, x^{\prime}) = \frac{kl}{a}}} \left\{ \operatorname{CF} \left( \{ x^{\prime} \}, \frac{l}{a} \right) \odot \frac{l}{a} \right\},
\end{eqnarray*}
\begin{eqnarray*}
&& \operatorname{CF} \left( \left\{ x \in \Gamma \,|\, \operatorname{dist}(S_2, x) \le \frac{kl_{S_2}}{m} \right\}, \frac{l_{S_2}}{m} \right)\\
&=& \operatorname{CF} \left( \left\{ x \in \Gamma \,|\, \operatorname{dist}(S_2, x) \le \frac{(k - 1)l_{S_2}}{m} \right\}, \frac{l_{S_2}}{m} \right)\\
&& \odot \bigodot_{\substack{x^{\prime} \in \Gamma : \\ \operatorname{dist}(S_2, x^{\prime}) = \frac{kl_{S_2}}{m}}} \left\{ \operatorname{CF} \left( \{x^{\prime} \}, \frac{l_{S_2}}{m} \right) \odot \frac{l_{S_2}}{m} \right\},
\end{eqnarray*}
\begin{eqnarray*}
\operatorname{CF}(S_2, l_{S_2}) &=& \left\{ \operatorname{CF} \left(S_2, \frac{l}{a} \right) \oplus \left(- \frac{l_{S_2}}{m} \right) \right\}\\ 
&& \odot \bigodot_{k = 1}^{m - 1} \operatorname{CF} \left( \left\{ x \in \Gamma \,|\, \operatorname{dist}(S_2, x) \le \frac{kl_{S_2}}{m} \right\}, \frac{l_{S_2}}{m} \right),
\end{eqnarray*}
and 
\[
\operatorname{CF}(S_1, l + l^{\prime}) = \operatorname{CF}(S_1, l ) \odot \operatorname{CF}(S_2, l^{\prime}).
\]}
\end{rem}

Let $\Gamma$ be a metric graph.
Let $R$ be as in Section \ref{introduction}.
Let $(G_{\circ}, l_{\circ})$ be the canonical model for $\Gamma$.

\begin{lemma}
	\label{lem}
Let $e$ be an edge of $G_{\circ}$.
Let $x$ be in $e^{\circ}$.
Then, $\operatorname{CF}(\{ x \}, l_x) \in R$.
\end{lemma}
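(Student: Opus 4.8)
The plan is to write $\operatorname{CF}(\{x\},l_x)$ down explicitly as a rational expression — using $\oplus$, $\odot$ and $\odot$-inverses, hence also the operation $\min(a,b)=\bigl(a^{\odot(-1)}\oplus b^{\odot(-1)}\bigr)^{\odot(-1)}$ — in the generator $f_e$ and in the vertex functions $\operatorname{CF}(\{v\},\infty)=-\operatorname{dist}(v,\cdot)$ of the endpoints of $e$. Fix an identification $e=[0,L]$ with $L:=l_{\circ}(e)$ and let $t_0\in(0,L)$ be the coordinate of $x$. Reflecting the identification if necessary (this only permutes $g_e$ and $h_e$ and leaves $f_e$, hence $R$, unchanged) I may assume $t_0\le L/2$; and if $t_0=L/2$ then $x=y_e$ and $\operatorname{CF}(\{x\},l_x)=f_e\in R$, so from now on $t_0<L/2$. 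Running Algorithm~\ref{alg:connection} with $S=\{x\}$, only the branch ``$S\subset e_i^{\circ}$'' contributes something other than $\operatorname{diam}(\Gamma)$, so $l_x=\min\{t_0,L-t_0\}=t_0$; and since every endpoint of $e$ is at distance $\ge t_0$ from $x$, the function $\operatorname{CF}(\{x\},l_x)$ equals $-|s-t_0|$ on $\{\,s\in e:|s-t_0|\le t_0\,\}$ and equals $-t_0$ on the rest of $\Gamma$.

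I would first produce, inside $R$, the ``flat–topped'' move $\operatorname{CF}([t_0,L-t_0],t_0)$. The subgraph $[t_0,L-t_0]\subseteq e$ is precisely the closed $(L/2-t_0)$–neighbourhood of $y_e$ (this neighbourhood cannot leave $e$, since each endpoint of $e$ is at distance $L/2>L/2-t_0$ from $y_e$), so ``extending'' as in Example~\ref{ex1} gives
\[
f_e=\operatorname{CF}(\{y_e\},\tfrac L2)=\operatorname{CF}\!\bigl(\{y_e\},\tfrac L2-t_0\bigr)\odot\operatorname{CF}([t_0,L-t_0],t_0),
\]
while ``cutting the bottom'' gives $\operatorname{CF}\bigl(\{y_e\},\tfrac L2-t_0\bigr)=f_e\oplus(t_0-\tfrac L2)$; hence
\[
\operatorname{CF}([t_0,L-t_0],t_0)=f_e\odot\bigl(f_e\oplus(t_0-\tfrac L2)\bigr)^{\odot(-1)}\in R .
\]
Writing $v_0$ for the endpoint of $e$ with $\operatorname{dist}(x,v_0)=t_0$, the move $\operatorname{CF}(\{v_0\},2t_0)\odot t_0=\bigl(\operatorname{CF}(\{v_0\},\infty)\oplus(-2t_0)\bigr)\odot t_0$ also lies in $R$ (a generator, a bottom–cut, and multiplication by a constant).

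The crux of the argument will then be the identity
\[
\operatorname{CF}(\{x\},l_x)=\min\!\Bigl(\operatorname{CF}([t_0,L-t_0],t_0),\ \operatorname{CF}(\{v_0\},2t_0)\odot t_0\Bigr),
\]
which, once established, finishes the proof, the right–hand side being obtained from elements of $R$ by $\oplus$, $\odot$ and $\odot$-inverses. Geometrically the two factors are the tent at $x$ with its top cut off flat along $[x,L-t_0]$ and, respectively, the tent at $x$ with its top cut off flat along $[v_0,x]$, so their pointwise minimum should be the tent at $x$. The inequality $\operatorname{CF}(\{x\},l_x)\le\min(\cdots)$ is immediate: $\{x\}\subseteq[t_0,L-t_0]$ gives the first bound, and $\operatorname{dist}(v_0,y)\le t_0+\operatorname{dist}(x,y)$ gives $\min\{\operatorname{dist}(v_0,y),2t_0\}\le t_0+\min\{\operatorname{dist}(x,y),t_0\}$, which is the second.

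The main obstacle is the reverse inequality, i.e.\ checking that at every point where $\operatorname{CF}([t_0,L-t_0],t_0)$ strictly exceeds $\operatorname{CF}(\{x\},l_x)$ the other factor is exactly $\operatorname{CF}(\{x\},l_x)$; this is where I expect the work to concentrate, and it is not uniform. When $e$ is ``bridge–like'' — more precisely, when every geodesic between two points of the $t_0$–neighbourhood of $[t_0,L-t_0]$ stays inside $e$ — it is a short case distinction according to the position of $y$ relative to $v_0$, $x$ and $L-t_0$. The delicate case is when $e$ lies on a short cycle, so geodesics wrap around: then $\operatorname{dist}(v_0,\cdot)$ decreases again on the far side of the cycle, the factor $\operatorname{CF}(\{v_0\},2t_0)\odot t_0$ becomes too large there, and the identity must be repaired by replacing it with a more elaborate element of $R$ assembled from $\operatorname{CF}(\{v_0\},\infty)$ together with $f_e,g_e,h_e$ and the decompositions of Remark~\ref{rem:connection}. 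Treating this cyclic case carefully is the heart of the matter; with it in hand the lemma follows.
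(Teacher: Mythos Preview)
Your construction of $\operatorname{CF}([t_0,L-t_0],t_0)=f_e\odot\bigl(f_e\oplus(t_0-\tfrac{L}{2})\bigr)^{\odot(-1)}$ is correct and works regardless of whether $e$ lies on a cycle (the $(L/2-t_0)$-ball around $y_e$ is always $[t_0,L-t_0]$, since any path out of $e$ from $y_e$ has length at least $L/2$), and your inequality $\operatorname{CF}(\{x\},l_x)\le\min(\cdots)$ is fine. But the reverse inequality genuinely fails in the cyclic case, and you do not repair it. Concretely, take $\Gamma$ a circle of circumference $L$ (one loop edge $e$, one vertex $v_0$) and any $t_0<L/4$; at $y=L-t_0$ one has $\operatorname{CF}(\{x\},t_0)(y)=-t_0$, while $\operatorname{CF}([t_0,L-t_0],t_0)(y)=0$ and, since $\operatorname{dist}(v_0,y)=t_0$, also $\bigl(\operatorname{CF}(\{v_0\},2t_0)\odot t_0\bigr)(y)=-t_0+t_0=0$, so your minimum equals $0\neq -t_0$. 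More generally the identity fails at any $y\in(2t_0,L-t_0]$ on $e$ with $\operatorname{dist}(v_0,y)<2t_0$, which occurs as soon as $e$ lies on a sufficiently short cycle.

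This is not a side issue to be cleaned up later: it is the content of the lemma. The function $\operatorname{CF}(\{v_0\},\infty)$ cannot distinguish the two halves of $e$ once $e$ lies on a short cycle, and that is exactly why $g_e,h_e$ are among the generators of $R$. The paper's proof never uses the vertex functions here; it writes $\operatorname{CF}(\{x\},l_x)$ (or, in the range $l_x>l_{\circ}(e)/3$, the shorter move $\operatorname{CF}(\{x\},l_{\circ}(e)-2l_x)$, afterwards extended via Remark~\ref{rem:connection}) directly as an explicit expression in $f_e$ together with one of $g_e,h_e$, according to which half of $e$ contains $x$. Because $g_e$ and $h_e$ are nonconstant only on $e^{\circ}$, no wrap-around can occur and the formulas hold uniformly. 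Your proposal invokes $g_e,h_e$ only in the phrase ``the identity must be repaired by replacing it with a more elaborate element of $R$ assembled from \ldots $f_e,g_e,h_e$''; supplying that element is precisely what the proof has to do, and you have not done it.
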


\begin{proof}
If $x$ is the midpoint of $e$, then $\operatorname{CF}(\{ x \}, l_x) = f_e \in R$.
Suppose that $x$ is not the midpoint of $e$.
Assume that $0 < l_x \le \frac{l_{\circ}(e)}{4}$ and $g_e(x) = -\frac{l_{\circ}(x)}{4}$.
Then
\begin{eqnarray*}
\operatorname{CF}(\{ x \}, l_x) &=& \left\{ \left( \frac{l_{\circ}(e)}{2} - l_x \right) \odot f_e \oplus \left(l_x - \frac{l_{\circ}(e)}{2} \right) \odot f_e^{\odot (-1)} \right\}^{\odot (-1)}\\
&& \odot \left( -\frac{l_{\circ}(e)}{4} \right) \odot g_e^{\odot (-1)} \oplus (-l_x) \in R.
\end{eqnarray*}

Similarly, if $0 < l_x \le \frac{l_{\circ}(e)}{4}$ and $h_e(x) = - \frac{l_{\circ}(e)}{4}$, then
\begin{eqnarray*}
\operatorname{CF}(\{ x \}, l_x) &=& \left\{ \left( \frac{l_{\circ}(e)}{2} - l_x \right) \odot f_e \oplus \left( l_x - \frac{l_{\circ}(e)}{2} \right) \odot f_e^{\odot (-1)} \right\}^{\odot (-1)}\\
&& \odot \left(- \frac{l_{\circ}(e)}{4} \right) \odot h_e^{\odot (-1)} \oplus (-l_x) \in R.
\end{eqnarray*}

When $\frac{l_{\circ}(e)}{4} < l_x \le \frac{l_{\circ}(e)}{3}$ and $g_e(x) = - \frac{l_{\circ}(e)}{4}$, we have
\begin{eqnarray*}
\operatorname{CF}(\{ x \}, l_x) &=& \left\{ \left( \frac{l_{\circ}(e)}{2} - l_x \right) \odot f_e \oplus \left( l_x - \frac{l_{\circ}(e)}{2} \right) \odot f_e^{\odot (-1)} \right\}^{\odot (-1)}\\
&& \odot \left\{ \left(- \frac{l_{\circ}(e)}{4} \right) \odot g_e^{\odot (-1)} \right\}^{\odot 2} \oplus (-l_x) \in R.
\end{eqnarray*}

Similarly, if $\frac{l_{\circ}(e)}{4} < l_x \le \frac{l_{\circ}(e)}{3}$ and $h_e(x) = - \frac{l_{\circ}(e)}{4}$, then
\begin{eqnarray*}
\operatorname{CF}(\{ x \}, l_x) &=& \left\{ \left( \frac{l_{\circ}(e)}{2} - l_x \right) \odot f_e \oplus \left( l_x - \frac{l_{\circ}(e)}{2} \right) \odot f_e^{\odot (-1)} \right\}^{\odot (-1)}\\
&& \odot \left\{ \left( - \frac{l_{\circ}(e)}{4} \right) \odot h_e^{\odot (-1)} \right\}^{\odot 2} \oplus (-l_x) \in R.
\end{eqnarray*}

When $\frac{l_{\circ}(e)}{3} < l_x < \frac{l_{\circ}(e)}{2}$ and $g_e(x) = - \frac{l_{\circ}(e)}{4}$, we have
\begin{eqnarray*}
\operatorname{CF}(\{ x \}, l_{\circ}(e) - 2l_x) &=& \left\{ \left( \frac{l_{\circ}(e)}{2} - l_x \right) \odot f_e \oplus \left( l_x - \frac{l_{\circ}(e)}{2} \right) \odot f_e^{\odot (-1)} \right\}^{\odot (-1)}\\
&& \odot \left\{ \left(- \frac{l_{\circ}(e)}{4} \right) \odot g_e^{\odot (-1)} \right\}^{\odot 2} \oplus (2l_x - l_{\circ}(e)) \in R.
\end{eqnarray*}

Similarly, if $\frac{l_{\circ}(e)}{3} < l_x < \frac{l_{\circ}(e)}{2}$ and $h_e(x) = - \frac{l_{\circ}(e)}{4}$, then
\begin{eqnarray*}
\operatorname{CF}(\{ x \}, l_{\circ}(e) - 2 l_x) &=& \left\{ \left( \frac{l_{\circ}(e)}{2} - l_x \right) \odot f_e \oplus \left( l_x - \frac{l_{\circ}(e)}{2} \right) \odot f_e^{\odot (-1)} \right\}^{\odot (-1)}\\
&& \odot \left\{ \left( - \frac{l_{\circ}(e)}{4} \right) \odot h_e^{\odot (-1)} \right\}^{\odot 2} \oplus (2l_x - l_{\circ}(e)) \in R.
\end{eqnarray*}

Let $x$ be in the fifth case.
Since
\begin{eqnarray*}
&&\operatorname{CF} \left( \left\{ x_1 \in \Gamma \,|\, \operatorname{dist}(x, x_1) \le \frac{l_{\circ}(e)}{2} - l_x \right\}, \frac{l_{\circ}(e)}{2} - l_x \right)\\
&=& \operatorname{CF} \left( \{ x \}, \frac{l_{\circ}(e)}{2} - l_x \right) \odot \bigodot_{\substack{x_1 \in e:\\ \operatorname{dist}(x, x_1) = \frac{l_{\circ}(e)}{2} - l_x }} \left\{ \operatorname{CF} \left( \{ x_1 \}, \frac{l_{\circ}(e)}{2} - l_x \right) \odot \left( \frac{l_{\circ}(e)}{2} - l_x \right) \right\} \in R,
\end{eqnarray*}
with inputs $l = \frac{l_{\circ}(e)}{2} - l_x$, $S_1 = \{ x_1 \in \Gamma \,|\, \operatorname{dist}(x, x_1) \le l_{\circ}(e) / 2 - l_x \}$ in Remark \ref{rem:connection}, we have $\operatorname{CF}(\{ x \}, l_x) \in R$.

When $x$ is in the sixth case, by the same argument, we have $\operatorname{CF}(\{ x \}, l_x) \in R$.
\end{proof}

Note that $l_x$ coincides with $\operatorname{min}(\operatorname{dist}(x, v), \operatorname{dist}(x, w))$ in the setting of Lemma \ref{lem}.

By Remark \ref{rem:connection} and Lemma \ref{lem}, we prove the following three lemmas.
Let $d$ be the diameter of $\Gamma$, i.e., $d = \operatorname{sup}\{ \operatorname{dist}(x, y) \,|\, x, y \in \Gamma \} = \operatorname{max}\{ \operatorname{dist}(x, y) \,|\, x, y \in \Gamma \}$.

\begin{lemma}
	\label{lem1}
For any $x \in \Gamma$ and any positive real number $l$, the chip firing move ${\rm CF}(\{ x \}, l)$ is in $R$.
\end{lemma}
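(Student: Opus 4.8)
The plan is to reduce the statement about an arbitrary point $x \in \Gamma$ and arbitrary positive real $l$ to the cases already handled, using the four operations in Remark~\ref{rem:connection} (cutting, extending, subdividing) together with Lemma~\ref{lem}, which gives us $\operatorname{CF}(\{x\},l_x)\in R$ for interior points $x$ of edges of the canonical model. The first reduction is on the point $x$: if $x$ is a vertex $v$ of $G_\circ$, then $\operatorname{CF}(\{v\},\infty)\in R$ by definition of $R$, and for interior points $x$ we have the seed $\operatorname{CF}(\{x\},l_x)$ from Lemma~\ref{lem}. So the remaining work is to pass from a known small chip firing move to one with an arbitrary radius $l$.

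\emph{Increasing the radius.} Starting from $\operatorname{CF}(\{x\},l_0)\in R$ with $l_0 = l_x$ (or any value for which we already know membership), I would use the ``extend'' identity: $\operatorname{CF}(S_1, l+l') = \operatorname{CF}(S_1,l)\odot\operatorname{CF}(S_2,l')$ where $S_2 = \{y : \operatorname{dist}(S_1,y)\le l\}$. The subtlety is that $S_2$ is a larger subgraph, not a point, so to iterate I need $\operatorname{CF}(S_2, l')\in R$ for a suitable $l'$. This is exactly what the first and third displays of Remark~\ref{rem:connection} provide: $\operatorname{CF}(S_2, l/a)$ and then $\operatorname{CF}(S_2, l_{S_2})$ are expressed via chip firing moves $\operatorname{CF}(\{x'\}, l/a)$ and $\operatorname{CF}(\{x'\}, l_{S_2}/m)$ at single points $x'$ lying on the ``frontier'' spheres, each of which is an interior point of some edge of $G_\circ$ (or a vertex), hence handled by Lemma~\ref{lem} combined with the cut/extend operations to adjust its radius to the required small value. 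Iterating the extend identity finitely many times — the number of steps is bounded because each step advances the frontier by a fixed positive amount and $\Gamma$ has finite diameter $d$ — reaches any radius up to and including $d$, and since $\operatorname{CF}(\{x\},l) = \operatorname{CF}(\{x\},d)$ for all $l\ge d$ (the distance function is bounded by $d$), this covers all $l$.

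\emph{Decreasing or fine-tuning the radius.} For radii smaller than the seed value, the ``cut the bottom'' identity $\operatorname{CF}(S_1,l') = \operatorname{CF}(S_1,l)\oplus(-l')$ immediately gives $\operatorname{CF}(\{x\},l')\in R$ for any $0 < l' \le l$ once $\operatorname{CF}(\{x\},l)\in R$. So the logical flow is: get $\operatorname{CF}(\{x\},l_x)$ from Lemma~\ref{lem} (or $\operatorname{CF}(\{v\},\infty)$ for vertices), push the radius up to $d$ by repeated extension through the spheres using Remark~\ref{rem:connection} and Lemma~\ref{lem}, and then cut down to the desired $l$. A point of care is handling the points $x'$ on the spheres that happen to be vertices of $G_\circ$: there one uses $\operatorname{CF}(\{v\},\infty)\in R$ and a cut to get $\operatorname{CF}(\{v\}, \delta)$ for the needed small $\delta$, rather than Lemma~\ref{lem}.

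\emph{Main obstacle.} The routine calculations are the explicit tropical identities, which are already packaged in Example~\ref{ex1} and Remark~\ref{rem:connection}; the genuine difficulty is the bookkeeping that guarantees the iteration terminates and that every single-point chip firing move invoked along the way has a radius small enough to fall under a case already proved — i.e., verifying that at each stage the relevant frontier points $x'$ satisfy $\operatorname{dist}(x',\text{nearest vertex}) \ge$ the radius being used, so that Lemma~\ref{lem} applies after at most a bounded cut. Making the choice of $a$ and $m$ in Remark~\ref{rem:connection} compatible across all the finitely many frontier spheres simultaneously, and confirming the union of spheres meets each edge in a controlled way (finitely many points, away from vertices except at the predictable spots), is where the argument must be laid out with care; everything else is an application of the displayed identities.
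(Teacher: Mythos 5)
Your proposal follows the paper's proof essentially verbatim: reduce to $\operatorname{CF}(\{x\},d)$ via the cut identity $\operatorname{CF}(\{x\},l)=\operatorname{CF}(\{x\},d)\oplus(-l)$, handle vertices of $G_{\circ}$ through the generator $\operatorname{CF}(\{v\},\infty)$, and for $x\in e^{\circ}$ iterate Remark~\ref{rem:connection} starting from the seed $\operatorname{CF}(\{x\},l_x)$ supplied by Lemma~\ref{lem}. The one place the paper is sharper is the termination argument: rather than your ``the frontier advances by a fixed positive amount'' (the increments $l_{S_2}$ are recomputed at each step and are not fixed, so a priori they could shrink), the paper observes that each expansion step strictly increases the number of whole edges of $G_{\circ}$ contained in $S_2$, which is bounded by $\#E(G_{\circ})$.
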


\begin{proof}
For any $x \in \Gamma$ and $l > 0$, by the definition of chip firing moves, we have $\operatorname{CF}(\{ x \}, l) = \operatorname{CF}(\{ x \}, d) \oplus (-l)$.
Hence it is sufficient to check that $\operatorname{CF}(\{ x \}, d) \in R$.
If $x \in V(G_{\circ})$, then $\operatorname{CF}(\{ x \}, d) \in R$.

Suppose that there exists an edge $e \in E(G_{\circ})$ such that $x \in e^{\circ}$.
Considering Remark \ref{rem:connection} with $l = l_x, S_1 = \{x \}$, by Lemma \ref{lem}, we have 
\[
\operatorname{CF}(S_2, l_{S_2}) \in R
\]
and
\[
\operatorname{CF}(S_1, l + l_{S_2}) = \operatorname{CF}(S_1, l) \odot \operatorname{CF}(S_2, l_{S_2}) \in R.
\]
Since $S_2$ contains a lot of whole edges of $G_{\circ}$ more than $S_1$ and the set of edges of $G_{\circ}$ is finite, by repeating inputs of $l = l_{S_2}$, $S_1 = S_2$ in Remark \ref{rem:connection}, we have $\operatorname{CF}(\{ x \}, d) \in R$.
\end{proof}

\begin{lemma}
	\label{lem2}
For any proper connected subgraph $\Gamma_1$ and any positive real number $l$, the chip firing move $\operatorname{CF}(\Gamma_1, l)$ is in $R$.
\end{lemma}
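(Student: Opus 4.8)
The plan is to reduce everything to chip firing moves of single points, which are in $R$ by Lemma \ref{lem1}, by building $\Gamma_1$ up one arc at a time while keeping the parameter $l$ under control. Two facts do the work. The first is an \emph{extension principle}: for a proper connected subgraph $S$, if $\operatorname{CF}(S, l_0) \in R$ for even one value $0 < l_0 \le l_S$, then $\operatorname{CF}(S, l) \in R$ for every $l > 0$. I would prove this just as in Lemma \ref{lem1}: ``cutting'' gives $\operatorname{CF}(S, l') \in R$ for all $0 < l' \le l_0$; feeding $S_1 = S$ and $l = l_0$ into Remark \ref{rem:connection} and invoking Lemma \ref{lem1} for the point chip firing moves that occur there, the first three identities of that remark give $\operatorname{CF}(S_2, l_{S_2}) \in R$ and the fourth gives $\operatorname{CF}(S, l_0 + l_{S_2}) \in R$; iterating with $S \leftarrow S_2$, and using that $S_2$ strictly enlarges at each step while $\#E(G_{\circ})$ is finite, the accumulated parameter surpasses the diameter $d$ after finitely many steps, and then ``cutting'' yields $\operatorname{CF}(S, l) \in R$ for every $l$.

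The second fact is the pointwise identity $\operatorname{CF}(S \cup A, l) = \operatorname{CF}(S, l) \oplus \operatorname{CF}(A, l)$, valid for \emph{any} subgraphs $S, A$, which holds because $\operatorname{dist}(x, S \cup A) = \min\{\operatorname{dist}(x, S), \operatorname{dist}(x, A)\}$ and $t \mapsto -\min\{t, l\}$ is non-increasing. In particular, if $\Gamma_1 = A_1 \cup \dots \cup A_s$ is the decomposition of $\Gamma_1$ into its closed edges for some model of $\Gamma$, then $\operatorname{CF}(\Gamma_1, l) = \operatorname{CF}(A_1, l) \oplus \dots \oplus \operatorname{CF}(A_s, l)$ for every $l > 0$, so it suffices to treat a single proper arc $A = [p, q]$.

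If $A$ is a point this is Lemma \ref{lem1}. Otherwise, starting from $\{p\}$ and applying the ``connect'' identity of Example \ref{ex1} (with admissible range taken from Algorithm \ref{alg:connection}) and then the extension principle, one obtains $\operatorname{CF}([p, p_1], l) \in R$ for all $l$, where $p_1 \in A$ is close to $p$; repeating, the segments $[p, p_1] \subset [p, p_2] \subset \cdots$ exhaust $A$ in finitely many steps (the admissible step length stays bounded below as $p_i \to q$), each of them a proper connected subgraph so that the extension principle applies. Hence $\operatorname{CF}(A, l) \in R$ for all $l$, and combined with the second fact this proves the lemma.

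The step I expect to cause the most trouble is the one hidden in the second fact: passing from arcs to a general connected subgraph when an added edge closes a cycle. The ``connect'' operation of Example \ref{ex1} attaches a segment whose far endpoint lies \emph{outside} the current subgraph, so it cannot by itself close a cycle, and one really has to appeal to the blanket $\oplus$-identity above — or, equivalently, verify directly the special case ``attach an edge along both of its endpoints'' by checking the three regimes: a point inside the new edge, a point of the old subgraph, and a point of $\Gamma$ outside both.
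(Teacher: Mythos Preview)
Your argument is correct and takes a genuinely different route from the paper's. Both proofs share the ``extension principle'' (iterating Remark~\ref{rem:connection} exactly as in Lemma~\ref{lem1}), but they diverge in how a general connected $\Gamma_1$ is broken into pieces already in $R$. The paper proceeds in two stages: for a segment $\Gamma_1\subsetneq e$ it writes down the one-shot midpoint formula
\[
\operatorname{CF}(\Gamma_1,l)=\Bigl[\bigl\{\operatorname{CF}(\{x\},\,l+\operatorname{dist}(x_1,x))\odot\operatorname{dist}(x_1,x)\bigr\}^{\odot(-1)}\oplus 0\Bigr]^{\odot(-1)},
\]
and for a general connected $\Gamma_1$ it uses the mixed $\oplus/\odot$ identity
\[
\operatorname{CF}(\Gamma_1,\varepsilon)=\Bigl\{\bigoplus_k\operatorname{CF}(\{x_k\},\varepsilon)\Bigr\}\odot\bigodot_k\bigl(\varepsilon\odot\operatorname{CF}(\Gamma'_{1k},\varepsilon)\bigr),
\]
valid only for small $\varepsilon$, where the $x_k$ are the boundary/vertex points in $\Gamma_1$ and the $\Gamma'_{1k}$ are the $\varepsilon$-shrunk open edges. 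Your blanket identity $\operatorname{CF}(S\cup A,l)=\operatorname{CF}(S,l)\oplus\operatorname{CF}(A,l)$ is cleaner: it holds for \emph{all} $l$ and for arbitrary overlapping or cycle-closing $S,A$, so your worry in the last paragraph is indeed resolved by it and not by ``connect''. Conversely, for a single arc the paper's midpoint formula is a one-liner, whereas your iterated ``connect'' is more laborious.

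One small imprecision: the claim that ``the admissible step length stays bounded below as $p_i\to q$'' is not literally true when $q\in V(G_\circ)$, since Algorithm~\ref{alg:connection} then gives $l_{[p,p_i]}\le\operatorname{dist}(p_i,q)\to 0$. Termination still holds, though: stepping by any fixed $\varepsilon_0$ not exceeding half the minimum edge length is admissible as long as $p_{i+1}\in e^\circ$, and once $\operatorname{dist}(p_i,q)\le\varepsilon_0$ the single final step with $y=q$ and $\varepsilon=\operatorname{dist}(p_i,q)\le l_{[p,p_i]}$ lands on $q$. So the process finishes in $O\bigl(|A|/\min_e l_\circ(e)\bigr)$ steps rather than by a uniform lower bound on $\varepsilon$.
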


\begin{proof}
By Lemma \ref{lem1}, if $\Gamma_1$ consists of only one point, then we have the conclusion.
Assume that $\Gamma_1$ does not consist of only one point.

Suppose that $\Gamma_1$ contains no whole edges of $G_{\circ}$ and that there exists an edge $e \in E(G_{\circ})$ containing $\Gamma_1$.
Let $x_1$ and $x_2$ be the endpoints of $\Gamma_1$.
Let $x$ be the midpoint of $\Gamma_1$.
By Lemma \ref{lem1}, for any positive real number $l$, we have
\[
\operatorname{CF}(\Gamma_1, l) = \left[ \left\{ \operatorname{CF}(\{ x \}, l + \operatorname{dist}(x_1, x)) \odot \operatorname{dist}(x_1, x) \right\}^{\odot (-1)} \oplus 0 \right]^{\odot (-1)} \in R.
\]
Note that $\operatorname{CF}(\Gamma_1, l)$ is also obtained as follows with a sufficiently large $b \in \boldsymbol{Z}_{>0}$:
\[
\operatorname{CF} \left(\Gamma_1, \frac{l_x}{b} \right) = \operatorname{CF} \left(\{ x \}, \frac{l_x}{b} \right) \odot \bigodot_{k = 1}^{b} \bigodot_{\substack{x^{\prime} \in \Gamma:\\ \operatorname{dist}(x, x^{\prime}) = \frac{kl_x}{b}}} \left\{ \operatorname{CF} \left(\{ x^{\prime} \}, \frac{l_x}{b} \odot \frac{l_x}{b} \right) \right\},
\]
and inputs $l = l_x / b$ and $S_1 = \Gamma_1$, and repeating this process with inputs $l = l_{S_2}$, $S_1 = S_2$ in Remark \ref{rem:connection}.

Suppose $\Gamma_1$ contains $p$ edges.
Let $\partial \Gamma_1 \cup (V(G_{\circ}) \cap \Gamma_1) = \{ x_1, \ldots, x_q \}$.
We may assume that $x_1, \ldots, x_q$ are distinct.
Let $\Gamma_{11}, \ldots, \Gamma_{1s}$ be connected components of $\Gamma_1 \setminus \{ x_1, \ldots, x_q \}$.
For a sufficiently small positive real number $\varepsilon$, let $\Gamma_{1i}^{\prime}$ be the connected subgraph $\{ x \in \Gamma_{1i} \,|\, \text{for any }j, \operatorname{dist}(x, x_j) \ge \varepsilon \}$ of $\Gamma$.
Then, we have
\begin{eqnarray*}
\operatorname{CF}(\Gamma_1, \varepsilon) = 
 \left\{ \bigoplus^q_{k=1} \operatorname{CF}(\{ x_k \}, \varepsilon) \right\}
\odot \bigodot_{k=1}^s \left( \varepsilon \odot \operatorname{CF}(\Gamma_{1k}^{\prime}, \varepsilon) \right).
\end{eqnarray*}
The last divisor is in the first case, and thus it is in $R$.
By inputting $l = \varepsilon$, $S_1 = \Gamma_1$ and by repeating inputs $l = l_{S_2}$, $S_1 = S_2$ in Remark \ref{rem:connection}, we have $\operatorname{CF}(\Gamma_1, d) \in R$.
From this, for any $l > 0$, we have $\operatorname{CF}(\Gamma_1, l) = \operatorname{CF}(\Gamma_1, d) \oplus (-l) \in R$.
\end{proof}

\begin{lemma}
	\label{lem3}
For any proper subgraph $\Gamma_1$ and any positive real number $l$, the chip firing move $\operatorname{CF}(\Gamma_1, l)$ is in $R$.
\end{lemma}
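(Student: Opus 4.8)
The plan is to reduce the general case of a proper subgraph $\Gamma_1$ to the case of a proper \emph{connected} subgraph, which is already handled by Lemma \ref{lem2}. A proper subgraph $\Gamma_1$ of $\Gamma$ has finitely many connected components by definition, say $\Gamma_1 = \Gamma_1^{(1)} \sqcup \cdots \sqcup \Gamma_1^{(r)}$, each of which is a (nonempty, compact, connected) subgraph of $\Gamma$ containing no connected component consisting only of points at infinity. Since we are working over a metric graph here (by Lemma \ref{lem:troptometric} we may assume $\Gamma_\infty = \varnothing$), each $\Gamma_1^{(k)}$ is a proper connected subgraph of $\Gamma$, so $\operatorname{CF}(\Gamma_1^{(k)}, l) \in R$ for every $l > 0$ by Lemma \ref{lem2}.

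The key observation is the last identity in Example \ref{ex1}: for two disjoint proper subgraphs $\Gamma_3, \Gamma_4$ with no components at infinity, and $l > 0$ small enough that the closed $l$-neighborhoods of $\Gamma_3$ and $\Gamma_4$ meet in a finite set, one has
\[
\operatorname{CF}(\Gamma_3 \sqcup \Gamma_4, l) = \operatorname{CF}(\Gamma_3, l) \oplus \operatorname{CF}(\Gamma_4, l).
\]
Indeed, $\operatorname{CF}(\Gamma_3 \sqcup \Gamma_4, l)(x) = -\min\{\operatorname{dist}(\Gamma_3 \sqcup \Gamma_4, x), l\} = -\min\{\operatorname{dist}(\Gamma_3, x), \operatorname{dist}(\Gamma_4, x), l\} = \max\{-\min\{\operatorname{dist}(\Gamma_3,x),l\}, -\min\{\operatorname{dist}(\Gamma_4,x),l\}\}$, which is the pointwise maximum as claimed (the finiteness hypothesis on the overlap of neighborhoods is exactly what guarantees the right-hand side is still a rational function, i.e.\ has finitely many pieces). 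Applying this identity inductively to $\Gamma_1^{(1)}, \ldots, \Gamma_1^{(r)}$, we obtain, for all sufficiently small $\varepsilon > 0$,
\[
\operatorname{CF}(\Gamma_1, \varepsilon) = \bigoplus_{k=1}^{r} \operatorname{CF}(\Gamma_1^{(k)}, \varepsilon) \in R,
\]
since each summand lies in $R$ and $R$ is closed under $\oplus$. Concretely, one may take $\varepsilon = \tfrac12 \min\{ l_{\Gamma_1^{(k)}} : 1 \le k \le r\} \wedge \tfrac12 \min_{j \ne k} \operatorname{dist}(\Gamma_1^{(j)}, \Gamma_1^{(k)})$, the latter minimum being positive because the components are disjoint compact sets.

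Finally I would bootstrap from small $\varepsilon$ up to arbitrary $l$. Setting $S_2 := \{x \in \Gamma \mid \operatorname{dist}(\Gamma_1, x) \le \varepsilon\}$, the subgraph $S_2$ strictly contains $\Gamma_1$ and contains at least one more whole edge of $G_\circ$ (or else is all of $\Gamma$, in which case we are in a degenerate situation handled directly); iterating the ``extend'' relation $\operatorname{CF}(\Gamma_1, l + l') = \operatorname{CF}(\Gamma_1, l) \odot \operatorname{CF}(S_2, l')$ from Remark \ref{rem:connection}, together with the fact that $E(G_\circ)$ is finite, yields $\operatorname{CF}(\Gamma_1, d) \in R$ after finitely many steps, where $d$ is the diameter of $\Gamma$. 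Then for arbitrary $l > 0$ the ``cut'' relation $\operatorname{CF}(\Gamma_1, l) = \operatorname{CF}(\Gamma_1, d) \oplus (-l)$ gives $\operatorname{CF}(\Gamma_1, l) \in R$. The only mild subtlety — the ``main obstacle'' — is bookkeeping the choice of $\varepsilon$ so that all the disjointness/finiteness hypotheses in the Example \ref{ex1} identities hold simultaneously, and checking that $S_2$ genuinely absorbs a new edge of $G_\circ$ at each iteration so that the induction terminates; but this is precisely the kind of estimate packaged by Algorithm \ref{alg:connection} and already used in the proof of Lemma \ref{lem2}, so it requires no new idea.
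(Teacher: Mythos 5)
Your first step --- splitting $\Gamma_1$ into its connected components $\Gamma_1^{(1)},\dots,\Gamma_1^{(r)}$ and writing $\operatorname{CF}(\Gamma_1,\varepsilon)=\bigoplus_{k}\operatorname{CF}(\Gamma_1^{(k)},\varepsilon)$ --- is exactly how the paper's proof begins, and it is correct. The gap is in your bootstrap. Each application of the ``extend'' relation $\operatorname{CF}(\Gamma_1,L+l')=\operatorname{CF}(\Gamma_1,L)\odot\operatorname{CF}(S_2,l')$ requires $\operatorname{CF}(S_2,l')\in R$, where $S_2$ is the closed $L$-neighborhood of $\Gamma_1$. For small $L$ this $S_2$ is still \emph{disconnected}, so Lemma \ref{lem2} does not apply to it, and Remark \ref{rem:connection} and Algorithm \ref{alg:connection} (both formulated for a \emph{connected} $S_1$, resp.\ $S$) do not supply $\operatorname{CF}(S_2,l')$ either: what you need at each step is precisely an instance of the lemma you are proving. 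Your termination criterion (``$S_2$ absorbs a new edge of $G_{\circ}$ at each iteration'') is the one from the connected case and does not address this circularity; the actual obstruction disappears only when the components of the neighborhood merge. The paper handles this by inducting on the number $s$ of connected components: it grows the neighborhoods exactly to the first radius $l^{\prime}_1$ at which they meet, so that $\bigcup_k \Gamma_k^{\prime}(l^{\prime}_1)$ has strictly fewer components, applies the inductive hypothesis to get $\operatorname{CF}\bigl(\bigcup_k \Gamma_k^{\prime}(l^{\prime}_1), d\bigr)\in R$, and then performs a single extension $\operatorname{CF}(\Gamma_1,d)=\operatorname{CF}(\Gamma_1,l^{\prime}_1)\odot\operatorname{CF}\bigl(\bigcup_k \Gamma_k^{\prime}(l^{\prime}_1), d\bigr)$ before cutting down to arbitrary $l$.

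The repair is cheap, and in fact your own first identity already finishes the proof: since $\operatorname{dist}(\Gamma_1,x)=\min_k\operatorname{dist}(\Gamma_1^{(k)},x)$, the pointwise computation you wrote out gives $\operatorname{CF}(\Gamma_1,l)=\bigoplus_{k}\operatorname{CF}(\Gamma_1^{(k)},l)$ for \emph{every} $l>0$, not only for small $l$; the finiteness hypothesis in Example \ref{ex1} plays no role in the two sides agreeing pointwise, and both sides are rational functions in any case. Combined with Lemma \ref{lem2} applied to each component, this yields $\operatorname{CF}(\Gamma_1,l)\in R$ directly, with no bootstrap at all. So either drop the restriction to small $\varepsilon$, or replace the edge-absorption iteration by an induction on the number of connected components as in the paper.
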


\begin{proof}
Let $\Gamma_1$ be a proper subgraph of $\Gamma$.
Let $s$ be the number of connected components of $\Gamma_1$.
If $s = 1$, then the conclusion follows Lemma \ref{lem2}.
Assume $s \ge 2$.
Let $\Gamma_1^{\prime}, \ldots, \Gamma_s^{\prime}$ be all the distinct connected components of $\Gamma_1$.
For $l^{\prime} > 0$, let $\Gamma^{\prime}_k (l^{\prime}) := \{ x \in \Gamma \,|\, \operatorname{dist}(\Gamma^{\prime}_k, x) \le l^{\prime} \}$.
If $l^{\prime}$ is sufficiently small, then the intersection of $\Gamma^{\prime}_1(l^{\prime}), \ldots, \Gamma^{\prime}_s(l^{\prime})$ is empty.
Let $l^{\prime}_1$ be the minimum value of $l^{\prime}$ such that this intersection is nonempty.
By induction on $s$, $\operatorname{CF} \left( \bigcup_{k = 1}^{s}{\Gamma_k^{\prime}(l^{\prime}_1)}, d \right) \in R$.
On the other hand,
\[
\operatorname{CF}(\Gamma_1, l^{\prime}_1) = \bigoplus_{k = 1}^s \operatorname{CF}(\Gamma_k^{\prime}, l_1^{\prime}) \in R.
\]
Hence
\[
\operatorname{CF}(\Gamma_1, d) = \operatorname{CF}(\Gamma_1, l^{\prime}_1) \odot \operatorname{CF} \left( \bigcup_{k = 1}^{s}{\Gamma_k^{\prime}(l^{\prime}_1)}, d \right) \in R.
\]
In conclusion, for any $l > 0$, we have
\[
\operatorname{CF}(\Gamma_1, l) = \operatorname{CF}(\Gamma_1, d) \oplus (-l) \in R. \qedhere
\]
\end{proof}

From Lemmas \ref{lem:chipfiring}, \ref{lem1}, \ref{lem2}, \ref{lem3}, we have the following proposition.

\begin{prop}
	\label{prop1}
Let $\Gamma$ be a metric graph.
Then, $\operatorname{Rat}(\Gamma)$ coincides with $R$.
In particular, it is finitely generated as a semifield ovar $\boldsymbol{T}$.
\end{prop}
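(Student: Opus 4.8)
The strategy is to establish the set-theoretic equality $\operatorname{Rat}(\Gamma) = R$; the ``in particular'' clause then follows at once, since $R$ is, by its very definition, generated over $\boldsymbol{T}$ by the finite set $\{ f_e, g_e, h_e \mid e \in E(G_\circ)\} \cup \{ \operatorname{CF}(\{v\},\infty) \mid v \in V(G_\circ)\}$, and $E(G_\circ)$, $V(G_\circ)$ are finite. The inclusion $R \subseteq \operatorname{Rat}(\Gamma)$ is immediate: each of these generators is a rational function on $\Gamma$, and $\operatorname{Rat}(\Gamma)$ is a semifield over $\boldsymbol{T}$, hence contains the subsemifield generated by them.

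For the reverse inclusion I would apply Lemma \ref{lem:chipfiring}, which presents $\operatorname{Rat}(\Gamma)$ as the group generated under $\odot$ by all chip firing moves and all constant functions. Since $R$ is a subsemifield of $\operatorname{Rat}(\Gamma)$, it is closed under $\odot$ and under $\odot$-inverses and contains $-\infty$, so it suffices to check that each such generator lies in $R$. Constant functions lie in $\boldsymbol{T} \subseteq R$. A chip firing move is $\operatorname{CF}(\Gamma_1, l)$ for a subgraph $\Gamma_1$ with no connected component consisting only of points at infinity --- a condition that is automatic here, as a metric graph has no points at infinity --- and $l \in \boldsymbol{R}_{>0} \cup \{\infty\}$. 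If $\Gamma_1 = \Gamma$ then $\operatorname{CF}(\Gamma, l) = 0 \in \boldsymbol{T} \subseteq R$. If $\Gamma_1 \subsetneq \Gamma$ and $l \in \boldsymbol{R}_{>0}$, then $\operatorname{CF}(\Gamma_1, l) \in R$ by Lemma \ref{lem3} (which, together with Lemmas \ref{lem1} and \ref{lem2}, rests on Lemma \ref{lem} and Remark \ref{rem:connection}). Finally, if $\Gamma_1 \subsetneq \Gamma$ and $l = \infty$, I would use that $\Gamma$ is compact, so its diameter $d$ is a finite positive real number with $\operatorname{dist}(x, \Gamma_1) \le d$ for all $x \in \Gamma$; hence $\operatorname{CF}(\Gamma_1, \infty) = \operatorname{CF}(\Gamma_1, d) \in R$ by Lemma \ref{lem3}. (When $\Gamma$ is a single point there are no proper subgraphs and $\operatorname{Rat}(\Gamma) = \boldsymbol{T} = R$ trivially.) This covers all generators, giving $\operatorname{Rat}(\Gamma) \subseteq R$.

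There is no serious obstacle here: the proposition is an assembly of results already in hand, with all the real work done in Lemma \ref{lem:chipfiring} and in Lemmas \ref{lem}--\ref{lem3}. The only two points that deserve a line of care are (i) observing that because $R$ is a subsemifield it is closed under the group law $\odot$, so that ``$R$ contains every generator'' upgrades to ``$R$ contains the whole group $\operatorname{Rat}(\Gamma)$'', and (ii) reducing the $l = \infty$ case to the finite-$l$ case using compactness of the metric graph $\Gamma$.
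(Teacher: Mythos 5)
Your proof is correct and follows essentially the same route as the paper, which simply derives the proposition from Lemmas \ref{lem:chipfiring}, \ref{lem1}, \ref{lem2}, and \ref{lem3} without further elaboration. The extra details you supply --- treating $\Gamma_1 = \Gamma$ and reducing $l = \infty$ to $l = d$ via compactness --- are small gaps in the paper's cited lemmas that you correctly identify and close.
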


Let us show Lemma \ref{lem:troptometric}:

\begin{proof}[Proof of Lemma \ref{lem:troptometric}]
There exists a natural inclusion $\iota : \Gamma^{\prime} \hookrightarrow \Gamma$ (cf. \cite{ABBR2}).
With this inclusion $\iota$, we have a natural inclusion $\kappa : \operatorname{Rat}(\Gamma^{\prime}) \hookrightarrow \operatorname{Rat}(\Gamma)$, i.e., for any $f^{\prime} \in \operatorname{Rat}(\Gamma^{\prime})$ and $x^{\prime} \in \Gamma^{\prime}$, $\kappa(f^{\prime})(\iota(x^{\prime})) = f^{\prime}(x^{\prime})$ and $\kappa(f^{\prime})$ is extended to be constant on each connected component of $\Gamma \setminus \iota(\Gamma^{\prime})$.
Let $\{ f_1^{\prime}, \ldots, f_n^{\prime} \}$ be a finite generating set of $\operatorname{Rat}(\Gamma^{\prime})$.
Let $L_1, \ldots, L_m$ be all the connected components of $\Gamma \setminus \iota(\Gamma)$.
Then $\{ \kappa(f_1^{\prime}), \ldots, \kappa(f_n^{\prime}), \operatorname{CF}(\overline{\Gamma \setminus L_1}, \infty), \ldots, \operatorname{CF}(\overline{\Gamma \setminus L_m}, \infty) \}$ is a finite generating set of $\operatorname{Rat}(\Gamma)$.
In fact, for any $f \in \operatorname{Rat}(\Gamma) \setminus \{ -\infty \}$, since $f$ is a piecewice affine function with a finite number of pieces, it breaks each $L_i$ into a finite number of pieces $L_{i1}, \ldots, L_{is_i}$ on each which it has a constant slope.
We may assume that $\overline{L_{ij}} \cap \{ \iota(\Gamma^{\prime}) \cup \bigcup_{k = 1}^{j - 1}\overline{L_{ik}} \} \not= \varnothing$.
Let $x_{i1}$ be the unique point of $\overline{L_{i1}} \cap \iota(\Gamma^{\prime})$.
For any $j = 2, \ldots, s_i$, let $x_{ij}$ be the unique point of $\overline{L_{i j}} \cap \overline{L_{i, j-1}}$.
Let $x_{i, s_i + 1}$ be the point at infinity of $\overline{L_{is_i}}$.
Let $a_{ij}$ be the slope of $f$ on $L_{ij}$ in the direction from $x_{i j}$ to $x_{i, j + 1}$.
Since the restriction $f|_{\iota(\Gamma^{\prime})}$ can be regarded as a rational function on $\Gamma^{\prime}$, it is written as $g(\kappa(f_1^{\prime}), \ldots, \kappa(f_n^{\prime})) \odot h(\kappa(f_1^{\prime}), \ldots, \kappa(f_n^{\prime}))^{\odot (-1)}$ with polynomials $g, h \in \boldsymbol{T}[X_1, \ldots, X_n]$.
Let $b_{ij}$ be the value $\operatorname{CF}(\overline{\Gamma \setminus L_i}, \infty)(x_{ij})$.
Then, we have
\begin{eqnarray*}
f &=& g(\kappa(f_1^{\prime}), \ldots, \kappa(f_n^{\prime})) \odot h(\kappa(f_1^{\prime}), \ldots, \kappa(f_n^{\prime}))^{\odot (-1)}\\
&& \odot
\bigodot_{i = 1}^m \bigodot_{j = 1}^{s_i} \left[ (- b_{ij}) \odot \left\{ \left( \operatorname{CF}(\overline{\Gamma \setminus L_i}, \infty) \oplus b_{i, j +1} \right)^{\odot (-1)} \oplus (-b_{ij}) \right\}^{\odot (-1)} \right]^{\odot (- a_{ij})},
\end{eqnarray*}
which completes the proof.
Here, when $a_{ij} = 0$, then the last divisor means the zero function $0 \in \operatorname{Rat}(\Gamma)$.
\end{proof}

In conclusion, we have Theorem \ref{thm1}.

\begin{rem}
	\label{rem:isomorphism}
\upshape{
By the proof of Theorem \ref{thm1}, we have the following:
for a tropical curve $\Gamma$, all chip firing moves defined by one finite point, and of the form $\operatorname{CF}(\Gamma \setminus (y, x], \infty)$ with $x \in \Gamma_{\infty}$ and a finite point $y$ on the unique edge incident to $x$ generate $\operatorname{Rat}(\Gamma)$ as a semifield over $\boldsymbol{T}$.
This assertion is used in the proof of \cite[Corollary 3.9]{JuAe3}.
}
\end{rem}

Since the pull-back of the rational function semifield of a tropical curve by a finite harmonic morphism contains $\boldsymbol{T}$, the following corollary follows from Theorem \ref{thm1}:

\begin{cor}
	\label{cor1}
Let $\varphi : \Gamma \to \Gamma^{\prime}$ be a finite harmonic morphism between tropical curves.
Then, $\operatorname{Rat}(\Gamma)$ is finitely generated as a semifield over $\varphi^{\ast}(\operatorname{Rat}(\Gamma^{\prime}))$.
\end{cor}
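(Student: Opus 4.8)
The plan is to deduce the corollary immediately from Theorem~\ref{thm1}, the only new input being that $\varphi^{\ast}(\operatorname{Rat}(\Gamma^{\prime}))$ sits between $\boldsymbol{T}$ and $\operatorname{Rat}(\Gamma)$ as sub-semifields.

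First I would record that the composite $\boldsymbol{T} \hookrightarrow \operatorname{Rat}(\Gamma^{\prime}) \xrightarrow{\varphi^{\ast}} \operatorname{Rat}(\Gamma)$ is nothing but the natural inclusion $\boldsymbol{T} \hookrightarrow \operatorname{Rat}(\Gamma)$: for $c \in \boldsymbol{R}$ the constant function $c$ on $\Gamma^{\prime}$ pulls back, under $f^{\prime} \mapsto f^{\prime} \circ \varphi$, to the constant function $c$ on $\Gamma$, and $\varphi^{\ast}(-\infty) = -\infty$. Since $\Gamma$ is nonempty, this restriction is injective, so it identifies $\boldsymbol{T}$ with a sub-semifield of $\operatorname{Rat}(\Gamma)$ contained in $\varphi^{\ast}(\operatorname{Rat}(\Gamma^{\prime}))$; altogether we obtain a chain of sub-semifields $\boldsymbol{T} \subseteq \varphi^{\ast}(\operatorname{Rat}(\Gamma^{\prime})) \subseteq \operatorname{Rat}(\Gamma)$.

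Next, by Theorem~\ref{thm1} there is a finite subset $F \subseteq \operatorname{Rat}(\Gamma)$ such that $\operatorname{Rat}(\Gamma)$ is the smallest sub-semifield of itself containing $\boldsymbol{T} \cup F$. Let $S$ be the smallest sub-semifield of $\operatorname{Rat}(\Gamma)$ containing $\varphi^{\ast}(\operatorname{Rat}(\Gamma^{\prime})) \cup F$. From $\boldsymbol{T} \cup F \subseteq \varphi^{\ast}(\operatorname{Rat}(\Gamma^{\prime})) \cup F \subseteq S$ together with minimality we get $\operatorname{Rat}(\Gamma) \subseteq S$, hence $S = \operatorname{Rat}(\Gamma)$; so the same finite set $F$ generates $\operatorname{Rat}(\Gamma)$ as a semifield over $\varphi^{\ast}(\operatorname{Rat}(\Gamma^{\prime}))$, which is the claim.

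I do not expect any real obstacle here, since all the substance is already contained in Theorem~\ref{thm1}; the one point deserving a sentence is the elementary monotonicity remark that enlarging the coefficient semifield from $\boldsymbol{T}$ to $\varphi^{\ast}(\operatorname{Rat}(\Gamma^{\prime}))$ only makes finite generation easier, so a $\boldsymbol{T}$-generating set is a fortiori a $\varphi^{\ast}(\operatorname{Rat}(\Gamma^{\prime}))$-generating set. (The genuinely nontrivial statement in the morphism setting is rather Proposition~\ref{prop:morphismsemiring}, about generation as a $\varphi^{\ast}(\operatorname{Rat}(\Gamma^{\prime}))$-algebra, where one is no longer allowed to take multiplicative inverses freely.)
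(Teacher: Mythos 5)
Your argument is correct and is exactly the paper's: the corollary is deduced from Theorem~\ref{thm1} via the single observation that $\varphi^{\ast}(\operatorname{Rat}(\Gamma^{\prime}))$ contains $\boldsymbol{T}$, so a finite generating set over $\boldsymbol{T}$ is a fortiori a finite generating set over the larger sub-semifield. Your write-up simply makes explicit the monotonicity step that the paper leaves implicit.
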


By the proof of Lemma \ref{lem:troptometric}, we have the following corollary:

\begin{cor}
	\label{cor2}
Let $\Gamma$ be a tropical curve.
Let $(G_{\circ}, l_{\circ})$ be the canonical model for $\Gamma$ and $E_{\infty}$ the subset of $E(G_{\circ})$ cosisting of all edges of length $\infty$.
Then, there exists a generating set of $\operatorname{Rat}(\Gamma)$ consisting of at most $\# V(G_{\circ}) + 2(\# E(G_{\circ}) -\# E_{\infty})$ elements.
\end{cor}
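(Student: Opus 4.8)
The plan is to assemble the generating set promised by the corollary from the one constructed in the proof of Lemma~\ref{lem:troptometric}, using Proposition~\ref{prop1} for its ``metric part'', and then to count. First I would let $\Gamma^{\prime}$ be the metric graph obtained from $\Gamma$ by contracting every edge of $(G_{\circ}, l_{\circ})$ of length $\infty$, write $(G_{\circ}^{\prime}, l_{\circ}^{\prime})$ for its canonical model, and let $\iota : \Gamma^{\prime} \hookrightarrow \Gamma$ and $\kappa : \operatorname{Rat}(\Gamma^{\prime}) \hookrightarrow \operatorname{Rat}(\Gamma)$ be the natural inclusions appearing in that proof. By Proposition~\ref{prop1}, $\operatorname{Rat}(\Gamma^{\prime})$ is generated over $\boldsymbol{T}$ by the rational functions $f_e$, $g_e$, $h_e$ attached to the edges $e$ of $G_{\circ}^{\prime}$ as in Section~\ref{introduction}, together with the functions $\operatorname{CF}(\{ v \}, \infty)$ for $v \in V(G_{\circ}^{\prime})$; invoking the two identities of Section~\ref{introduction} recovering $g_e$ and $h_e$ from $g_e \odot h_e^{\odot (-1)}$, this generating set can be replaced by one with at most $2 \# E(G_{\circ}^{\prime}) + \# V(G_{\circ}^{\prime})$ elements. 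The proof of Lemma~\ref{lem:troptometric} then shows that the $\kappa$-images of a generating set of $\operatorname{Rat}(\Gamma^{\prime})$, together with the $m$ functions $\operatorname{CF}(\overline{\Gamma \setminus L_i}, \infty)$, where $L_1, \ldots, L_m$ are the connected components of $\Gamma \setminus \iota(\Gamma^{\prime})$, form a generating set of $\operatorname{Rat}(\Gamma)$; hence $\operatorname{Rat}(\Gamma)$ is generated over $\boldsymbol{T}$ by at most $2 \# E(G_{\circ}^{\prime}) + \# V(G_{\circ}^{\prime}) + m$ elements.

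It then remains to check that $2 \# E(G_{\circ}^{\prime}) + \# V(G_{\circ}^{\prime}) + m \le \# V(G_{\circ}) + 2( \# E(G_{\circ}) - \# E_{\infty})$. For this I would use three facts. $(\mathrm{i})$ In a canonical model the points at infinity are in bijection with the edges of length $\infty$ (each point at infinity has valence one, hence lies on a unique edge of $G_{\circ}$, which is forced to have length $\infty$), and each such edge accounts for exactly one connected component of $\Gamma \setminus \iota(\Gamma^{\prime})$, namely the corresponding half-open end; thus $m = \# E_{\infty}$. $(\mathrm{ii})$ Contracting a leaf edge is a homotopy equivalence, so $g(\Gamma) = g(\Gamma^{\prime}) =: g$, whence $\# E(G_{\circ}) = \# V(G_{\circ}) + g - 1$ and $\# E(G_{\circ}^{\prime}) = \# V(G_{\circ}^{\prime}) + g - 1$. $(\mathrm{iii})$ $\# V(G_{\circ}^{\prime}) \le \# V(G_{\circ}) - \# E_{\infty}$: passing from $\Gamma$ to $\Gamma^{\prime}$ deletes the $\# E_{\infty}$ points at infinity, each of which is a vertex of $G_{\circ}$, while contracting the edges of length $\infty$ only lowers the valences of the remaining points and so creates no new vertex of the canonical model. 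Substituting $(\mathrm{i})$ and $(\mathrm{ii})$ into $2 \# E(G_{\circ}^{\prime}) + \# V(G_{\circ}^{\prime}) + m$ and simplifying, the desired inequality collapses to $\# V(G_{\circ}^{\prime}) + \# E_{\infty} \le \# V(G_{\circ})$, which is exactly $(\mathrm{iii})$; this finishes the proof.

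The point demanding the most care is $(\mathrm{iii})$, together with the verification that it is not spoiled by the two exceptional clauses in the definition of a canonical model. When $\Gamma$ is the tree with model $(T, l)$ carrying two edges of length $\infty$, the curve $\Gamma^{\prime}$ is a single point, so $\# V(G_{\circ}^{\prime}) = 1$, $\# V(G_{\circ}) = 3$, $\# E_{\infty} = 2$, and $(\mathrm{iii})$ still holds; the circle clause can only be triggered on the $\Gamma^{\prime}$ side (a circle has no leaf edge), and there a direct inspection shows that the single designated vertex is still absorbed by $(\mathrm{iii})$. Finally, the degenerate case $\# E_{\infty} = 0$ is just Proposition~\ref{prop1} together with the $g_e \odot h_e^{\odot (-1)}$ reduction, with $\Gamma^{\prime} = \Gamma$ and $m = 0$.
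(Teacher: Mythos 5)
Your proposal is correct and follows essentially the same route as the paper: generate $\operatorname{Rat}(\Gamma^{\prime})$ of the contracted metric graph via Proposition~\ref{prop1} (with the $g_e \odot h_e^{\odot (-1)}$ reduction giving two generators per edge plus one per vertex), and adjoin the $\# E_{\infty}$ functions $\operatorname{CF}(\overline{\Gamma \setminus L_i}, \infty)$ from the proof of Lemma~\ref{lem:troptometric}. The only difference is that you carefully verify, via the genus relation and the inequality $\# V(G_{\circ}^{\prime}) \le \# V(G_{\circ}) - \# E_{\infty}$, the count that the paper simply asserts by identifying the canonical model of $\Gamma^{\prime}$ with the finite part of $(G_{\circ}, l_{\circ})$ — a worthwhile refinement, but not a different argument.
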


\begin{proof}
$V(G_{\circ})$ contains $\# V(G_{\circ}) - \# E_{\infty}$ vertices which are finite points.
Thus $R$ for the metric graph obtained from $\Gamma$ by contracting all edges in $E_{\infty}$ is generated by $\# V(G_{\circ}) - \# E_{\infty} + 2(\# E(G_{\circ}) - \# E_{\infty})$ elements.
From the proof of Lemma \ref{lem:troptometric}, $\operatorname{Rat}(\Gamma)$ is generated by $\# V(G_{\circ}) - \# E_{\infty} + 2(\# E(G_{\circ}) - \# E_{\infty}) + \# E_{\infty} = \# V(G_{\circ}) + 2(\# E(G_{\circ}) -\# E_{\infty})$ elements.
\end{proof}

Second, we consider rational function semifields of trees.

\begin{lemma}
	\label{lem:tree}
Let $T$ be a tree.
Let $(G_{\circ}, l_{\circ})$ be the canonical model for $T$.
Let $V_1 \subset V(G_{\circ})$ denote the subset of all leaf ends.
If $\# V_1$ is even, then there exists a pairing of vertices in $V_1$ such that the union of unique paths connecting paired vertices covers $T$.
\end{lemma}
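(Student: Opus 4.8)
The plan is to induct on the number of edges of $G_\circ$. First I would observe the parity constraint is exactly what makes pairing possible: in a tree, for a leaf end $v$, the unique path from $v$ to any other vertex starts along the single leaf edge at $v$, so every leaf end must be an endpoint of at least one path in any covering family; since each path has two endpoints, a family of $\#V_1/2$ paths whose endpoints are exactly the elements of $V_1$ is the natural candidate, and such a family can only exist when $\#V_1$ is even.

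For the base cases: if $\#V_1 = 0$ then $T$ is a single point (the canonical model of a point), and the empty pairing works; if $\#V_1 = 2$ then $T$ is a segment (a path), the two leaf ends are its endpoints, and the single path between them is all of $T$. Now suppose $\#V_1 = 2k$ with $k \ge 2$. The key step is to pick two leaf ends $u, v$ that are ``close'' in the tree, pair them, remove the path $P_{uv}$ between them, and reduce. Concretely, I would choose a vertex $w$ of $G_\circ$ that is an endpoint of a maximal number of distinct leaf edges — equivalently, a deepest branch vertex — so that among the leaf ends hanging off $w$ there are at least two, say $u$ and $v$; pair $(u,v)$. The union of the two leaf edges $[u,w]$ and $[w,v]$ is the path $P_{uv}$. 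Deleting the relative interiors of these two leaf edges from $T$ (but keeping $w$) yields a tree $T'$ whose set of leaf ends is $V_1 \setminus \{u,v\}$, together with possibly $w$ itself if $w$ now has valence one in $T'$. If $w$ does not become a new leaf end, $\#V(T')$'s leaf-end count is $2k-2$, still even, and by induction $T'$ is covered by $k-1$ paths pairing up $V_1\setminus\{u,v\}$; adding back $P_{uv}$ covers $T$. If $w$ does become a leaf end of $T'$, then the leaf-end count of $T'$ is $2k-1$, which is odd, and the induction hypothesis does not apply directly — this is the main obstacle.

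To handle that obstacle, I would instead choose the deepest branch vertex $w$ more carefully: take $w$ to be a vertex all of whose incident edges except one are leaf edges (such a ``penultimate'' vertex exists in any tree that is not a single segment), and let $m \ge 1$ be the number of leaf ends adjacent to $w$. If $m$ is even, pair those $m$ leaf ends among themselves along paths through $w$; removing all $m$ leaf edges leaves a tree $T'$ in which $w$ has valence one, so $w$ is a new leaf end, and $T'$ has leaf-end count $(2k-m)+1$. For this to be even we would need $m$ odd, so this case split is still delicate; the clean resolution is: if $m$ is odd, pair $m-1$ of the leaf ends at $w$ among themselves and pair the remaining one, say $u$, with a leaf end $u'$ in the rest of the tree chosen so that the path $P_{uu'}$ passes through $w$ and the ``one non-leaf edge'' at $w$ — then removing $P_{uu'}$ together with the $m-1$ already-handled leaf edges deletes $w$ entirely, and $T'$ has leaf-end count $2k-m-1$, which is even since $m$ is odd. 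If $m$ is even, pair all $m$ leaf ends at $w$ among themselves; removing these $m$ leaf edges makes $w$ a leaf end of $T'$, and I then also pair $w$ (viewed as the new leaf end) — but wait, $w\in V_1(T')$ only after deletion, so instead I keep one of the $m$ leaf edges: pair $m-2$ of them among themselves, and pair the remaining two leaf ends $u, u''$ with each other via the path through $w$, then delete only the $m-2$ leaf edges and the path $P_{uu''}$; this removes $w$ and leaves leaf-end count $2k-m$, even. In all cases the remaining tree has an even number of leaf ends and strictly fewer edges, the chosen paths cover the deleted part (all of $w$'s leaf neighborhood plus one further leaf edge and the non-leaf edge at $w$ when needed), and the inductive paths cover $T'$, so the union covers $T$. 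The only genuinely nontrivial point to verify is that in each branch the deleted subset is exactly a union of the chosen paths and that $w$ is fully absorbed, which is a direct check once the penultimate vertex is fixed.
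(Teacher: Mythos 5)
Your inductive step is broken in both branches, and the even-$m$ branch already fails on the smallest nontrivial tree. Take $T$ to be the double star: two vertices $c,d$ joined by an edge, with leaf ends $u_1,u_2$ adjacent to $c$ and $v_1,v_2$ adjacent to $d$. Then $w=c$ is a penultimate vertex with $m=2$, and your recipe pairs $u_1$ with $u_2$; the path $P_{u_1u_2}$ consists of the two leaf edges at $c$ only and does not contain the edge $cd$. After deleting the star at $c$, the edge $cd$ survives in $T'$ as a pendant edge whose free endpoint is not in $V_1$, so no later path between remaining elements of $V_1$ can pass through it: your procedure outputs the pairing $\{u_1u_2\},\{v_1v_2\}$, which misses $cd$ (the covering pairings here are exactly the two ``crossing'' ones). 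In general, pairing two leaf ends that share a penultimate vertex wastes them, since their connecting path covers nothing beyond their own two leaf edges. The odd-$m$ branch has a further structural problem: deleting the closed path $P_{uu'}$ from a tree typically disconnects the remainder into a forest whose components acquire new valence-one points (the attachment points along $P_{uu'}$) not lying in $V_1$; hence the induction hypothesis does not apply, the leaf count is not $2k-m-1$, and the components need not each contain an even number of $V_1$-leaves. Concretely, for the tree with spine $a_1a_2a_3$, three leaf ends at $a_1$, one leaf end $r$ at $a_2$ and two at $a_3$, your prescription admits the choice $u'=r$, after which the edge $a_2a_3$ is never covered.

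The paper avoids all of this bookkeeping with an exchange argument instead of an induction: start from an arbitrary perfect pairing of $V_1$ (which exists by the parity hypothesis); if some edge $e$ is uncovered, pick leaf ends $v,w$ whose connecting path contains $e$ (they are necessarily paired with distinct partners $v',w'$), and re-pair as $\{v,w\}$ and $\{v',w'\}$. A check on the quartet $\{v,v',w,w'\}$ shows the union of the two new paths contains $e$ together with both old paths, so the set of covered edges strictly increases and the process terminates. If you insist on an induction, you must pair a leaf at $w$ with a leaf on the far side of the non-leaf edge at $w$ and then treat the resulting forest component by component; that is workable but substantially more delicate than the exchange argument.
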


\begin{proof}
Since $\# V_1$ is even, there exists a pairing of vertices in $V_1$.
If it is not desired, then there exists an edge $e$ of $G_{\circ}$ which is not contained in the union of unique paths connecting paired vertices.
Since $T$ is a tree, there exist two vertices $v, w$ such that the unique path connecting them contains $e$.
By pairing again $v, w$ and the two other vertices $v^{\prime}, w^{\prime}$ originally paired with $v, w$ respectively, the number of covered edges increases.
In fact, the union of the path from $v$ to $w$ and the path from $v^{\prime}$ to $w^{\prime}$ contains $e$ and both the path from $v$ to $v^{\prime}$ and the path from $w$ to $w^{\prime}$.
Hence, by repeating this process, we have the conclusion.
\end{proof}

\begin{prop}
	\label{prop:tree}
Let $T$ be a tree.
Let $(G_{\circ}, l_{\circ})$ be the canonical model for $T$ and $V_1 \subset V(G_{\circ})$ the subset of all leaf ends.
Then there exists a generating set of $\operatorname{Rat}(T)$ consisting of at most $\frac{[\# V_1 + 1]}{2}$ elements, where $[x] = \max\{ n \in \boldsymbol{Z} \,|\, n \le x \}$.
\end{prop}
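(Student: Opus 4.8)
My plan is to cover $T$ by few leaf‑to‑leaf paths, attach one ``coordinate'' rational function to each, and prove these generate $\operatorname{Rat}(T)$ by an extension argument across a product embedding.

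\emph{Choosing the generators.} If $\#V_1\le 1$ then $T$ is a point and $\operatorname{Rat}(T)=\boldsymbol{T}$ needs no generators, so assume $\#V_1\ge 2$ and set $k:=[(\#V_1+1)/2]$. When $\#V_1$ is even, Lemma~\ref{lem:tree} produces $k=\#V_1/2$ leaf‑to‑leaf paths whose union is $T$. When $\#V_1$ is odd, I choose a leaf end $u$, apply Lemma~\ref{lem:tree} to the convex hull of $V_1\setminus\{u\}$ (a subtree whose leaf ends are exactly $V_1\setminus\{u\}$, an even set), and adjoin one further path from $u$ to a leaf end of that hull; this yields $(\#V_1-1)/2+1=k$ leaf‑to‑leaf paths $P_1,\dots,P_k$ whose union is $T$. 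For each $i$ let $r_i\colon T\to P_i$ be the nearest‑point retraction and fix an arc‑length coordinate $c_i$ on $P_i$; put $\phi_i:=c_i\circ r_i$. Then $\phi_i\in\operatorname{Rat}(T)$ (slope $\pm1$ along $P_i$, slope $0$ on each connected component of $T\setminus P_i$). It suffices to prove $\operatorname{Rat}(T)=\langle\phi_1,\dots,\phi_k\rangle_{\boldsymbol{T}}$, since this exhibits a generating set with $k=[(\#V_1+1)/2]$ elements.

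\emph{The product embedding.} Because $P_1\cup\cdots\cup P_k=T$, every point of $T$ is fixed by some $r_i$, so $R:=(r_1,\dots,r_k)\colon T\to Q:=P_1\times\cdots\times P_k$ is a continuous injection of a compact space, hence a closed embedding onto a finite polyhedral complex $R(T)\subset Q$; moreover $\|R(a)-R(b)\|_1\ge\operatorname{dist}(a,b)$ for all $a,b\in T$ (for each $i$, $r_i(a)$ and $r_i(b)$ bracket the connected arc $P_i\cap[a,b]$ along $P_i$, and $\sum_i\operatorname{length}(P_i\cap[a,b])\ge\operatorname{dist}(a,b)$). The coordinate functions $\pi_1,\dots,\pi_k$ of $Q$ generate $\operatorname{Rat}(Q)$ over $\boldsymbol{T}$ (every piecewise‑affine function with integer slopes on a product of intervals is, via the max–min normal form of piecewise‑affine functions, a tropical rational function in the coordinates), and $R^{*}\pi_i=\phi_i$; hence $\langle\phi_1,\dots,\phi_k\rangle_{\boldsymbol{T}}=R^{*}(\operatorname{Rat}(Q))$, and the claim reduces to surjectivity of $R^{*}\colon\operatorname{Rat}(Q)\to\operatorname{Rat}(T)$. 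Given $\psi\in\operatorname{Rat}(T)\setminus\{-\infty\}$ with largest absolute slope $L$, I extend it by the McShane‑type formula
\[
\widetilde\psi(z):=\min_{y\in R(T)}\bigl(\psi(R^{-1}(y))+N\|z-y\|_1\bigr),\qquad z\in Q,
\]
for a fixed integer $N\ge L$. The displayed inequality forces the minimum at $z\in R(T)$ to be attained at $y=z$, so $\widetilde\psi\circ R=\psi$; and $\widetilde\psi$ is piecewise affine with finitely many pieces (a minimum of one such function per edge of $R(T)$) with integer slopes, since on each full‑dimensional cell the envelope formula gives $\partial_{z_i}\widetilde\psi=N\operatorname{sgn}(z_i-y^{*}_i)\in\{0,\pm N\}$. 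Thus $\widetilde\psi\in\operatorname{Rat}(Q)$ and $R^{*}$ is surjective; together with $-\infty\in\boldsymbol{T}$ this gives $\operatorname{Rat}(T)=\langle\phi_1,\dots,\phi_k\rangle_{\boldsymbol{T}}$, hence the proposition.

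\emph{Expected difficulty.} The heart of the argument is checking that the extension $\widetilde\psi$ is again a rational function on $Q$, i.e.\ that it is piecewise affine with finitely many pieces and, crucially, with \emph{integer} slopes; this is precisely where it matters that one uses the $\ell^1$‑metric and an integer multiplier $N$. If instead one prefers an induction on the tree in the style of Lemmas~\ref{lem1}--\ref{lem3}, reducing via the identity $\operatorname{CF}(\{x\},\infty)=\bigl(-|\phi_j\odot(-c)|\bigr)\odot\operatorname{CF}(P_j,\infty)$ to the statement $\operatorname{CF}(P_j,\infty)\in\langle\phi_i\rangle$, the same difficulty reappears as the problem of extending a rational function from a pendant subtree — where it is known to lie in the restricted algebra — to an element of $\langle\phi_i\rangle$ on the whole tree. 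A secondary point is the treatment of edges of infinite length: one keeps the $P_i$ (and the factors of $Q$) equal to $[0,\infty]$ or $[-\infty,\infty]$ as needed rather than reducing to the metric case via Lemma~\ref{lem:troptometric}, since that reduction would lose the sharp constant.
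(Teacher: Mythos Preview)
Your generators coincide with the paper's: one slope-$\pm1$ ``coordinate'' function per covering leaf-to-leaf path (plus one extra in the odd case). The paper, however, proves generation by a direct reduction to the machinery already built: it asserts that from these path functions one can reconstruct each of the canonical generators $f_{e'},g_{e'},h_{e'},\operatorname{CF}(\{v\},\infty)$ of Proposition~\ref{prop1} on the contracted metric graph $T'$, together with the functions $\operatorname{CF}(\overline{\Gamma\setminus L_i},\infty)$ from the proof of Lemma~\ref{lem:troptometric} handling the infinite leaves. Your route via the product embedding $R:T\hookrightarrow Q=\prod_iP_i$ and a McShane extension is genuinely different and more conceptual; it sidesteps the edge-by-edge bookkeeping, at the price of introducing $\operatorname{Rat}(Q)$ for a higher-dimensional $Q$ and having to verify that the extension lands there.

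That verification contains an error. Your formula $\partial_{z_i}\widetilde\psi=N\operatorname{sgn}(z_i-y_i^{*})\in\{0,\pm N\}$ is valid only on cells where the minimizer $y^{*}$ stays fixed. On cells where $y^{*}$ slides along an edge of $R(T)$ with direction $v\in\{0,\pm1\}^k$, the active constraint is $z_i=y_i^{*}$ for a single index $i$ with $v_i\neq0$; writing $t^{*}=(z_i-a_i)/v_i$ one gets
\[
\partial_{z_i}\widetilde\psi
= v_i\Bigl(\psi'(t^{*})-N\sum_{j\neq i}\operatorname{sgn}(z_j-y_j^{*})\,v_j\Bigr),
\]
which is an integer (because $v_i=\pm1$, $\psi'\in\boldsymbol{Z}$, $v_j\in\{0,\pm1\}$) but in general not $\pm N$. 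So the conclusion that $\widetilde\psi$ has integer slopes survives, but the stated reason does not; replace the one-line envelope argument by this computation (together with the observation that the moving-minimizer break points all have $t$-slope jump $2N|v_i|=2N$, which is what forces integrality in the infimal projection). Separately, your treatment of infinite edges is only a remark: on factors $[0,\infty]$ the McShane minimum can meet $-\infty+\infty$, and the assertion that coordinates generate $\operatorname{Rat}(Q)$ needs a definition and proof in that setting. Since you explicitly decline the reduction through Lemma~\ref{lem:troptometric} (correctly, as it would cost the sharp bound), this case has to be argued and is presently a gap.
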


\begin{proof}
By Lemma \ref{lem:tree}, there exists a pairing of vertices in $V_1$ except at most one vertex $v_0$ such that the union of unique paths connecting paired vertices covers $T$ except at most one edge $e_0$ incident $v_0$.
Let $v, w$ be any paired vertices and $P$ the unique path connecting them.
Let $f$ be a rational function on $T$ which has slope one on $P$ in the direction from $v$ to $w$ and constant on other points.
Let $g$ be a rational function on $T$ which has slope one on $e_0$ and constant on other points.
Then such $f$ and $g$ generates $\operatorname{Rat}(T)$ as a semifield over $\boldsymbol{T}$.
In fact, for a tree $T^{\prime}$ which is a metric graph obtained from $T$ by contracting edges of length $\infty$, the restrictions of such $f$ and $g$ on $T^{\prime}$ generate $f_{e^{\prime}}, g_{e^{\prime}}, h_{e^{\prime}}$ and $\operatorname{CF}(\{ v \}, \infty)$ for each edge $e^{\prime}$ and each vertex $v$ of the underlying graph of the canonical model for $T^{\prime}$ and chip firing moves of the form of $\operatorname{CF}(\overline{\Gamma \setminus L_i}, \infty)$ in the proof of Lemma \ref{lem:troptometric}.
Hence we have the conclusion.
\end{proof}

Third, we show that except the singleton case, rational function semifields of tropical curves are not finitely generated as a \textit{$\boldsymbol{T}$-algebra} by the following two lemmas.

The following lemma holds by the definitions of two operators $\odot$, $\oplus$.

\begin{lemma}
	\label{lem:poly}
Let $\Gamma$ be a tropical curve.
For any rational functions $f, g \in \operatorname{Rat}(\Gamma) \setminus \{ -\infty \}$, $f \odot g$ and $f \oplus g$ may have as these poles only points that are poles of $f$ or $g$.
\end{lemma}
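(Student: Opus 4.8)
The statement to prove is Lemma \ref{lem:poly}: for any rational functions $f, g \in \operatorname{Rat}(\Gamma) \setminus \{ -\infty \}$, the functions $f \odot g$ and $f \oplus g$ may have as poles only points that are poles of $f$ or $g$.

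The plan is to argue directly from the local definition of a pole as a point where the sum of outgoing slopes is negative. Fix a point $x \in \Gamma$ and suppose $x$ is \emph{not} a pole of $f$ and not a pole of $g$; I will show $x$ is not a pole of $f \odot g$ nor of $f \oplus g$. First I would reduce to the case where $x$ is a finite point: if $x$ is a point at infinity, then $\operatorname{val}(x) = 1$, and since $f \odot g$ and $f \oplus g$ are extended continuously to $\Gamma_\infty$, the local behaviour there is governed by the unique incident edge, so the same slope computation applies.

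For the multiplication $\odot$: at a finite point $x$, along each edge direction emanating from $x$ the outgoing slope of $f \odot g = f + g$ is the sum of the outgoing slopes of $f$ and of $g$ in that direction. Summing over all directions at $x$, the sum of outgoing slopes of $f \odot g$ at $x$ equals the sum of outgoing slopes of $f$ at $x$ plus the sum of outgoing slopes of $g$ at $x$. If $x$ is not a pole of either, both these sums are $\ge 0$, hence their sum is $\ge 0$, so $x$ is not a pole of $f \odot g$.

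For the addition $\oplus$: near $x$, in each direction $f \oplus g = \max\{f, g\}$ agrees, on a sufficiently small initial segment of that edge, with whichever of $f, g$ is larger there (or with either if they coincide on a segment), so the outgoing slope of $f \oplus g$ in that direction is the outgoing slope of $f$ or of $g$; moreover if $f(x) > g(x)$ then $f \oplus g$ equals $f$ in \emph{every} direction near $x$ (so the sum of outgoing slopes is that of $f$, hence $\ge 0$), and symmetrically if $g(x) > f(x)$; the only subtle case is $f(x) = g(x)$. In that case, for each direction the outgoing slope of $f \oplus g$ is $\max$ of the two outgoing slopes when... more carefully: it is the outgoing slope of $f$ if $f \ge g$ on an initial segment in that direction, and of $g$ otherwise, and in either case it is $\ge \min$ of the two; so the sum of outgoing slopes of $f \oplus g$ at $x$ is $\ge$ the smaller of (sum of outgoing slopes of $f$) and (sum of outgoing slopes of $g$)? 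That inequality is not quite termwise-justified, so the main obstacle is handling the case $f(x) = g(x)$ cleanly. The clean way: the outgoing slope of $\max\{f,g\}$ in a direction is $\ge$ the outgoing slope of $f$ in that direction whenever $f(x)=g(x)$ (since $\max\{f,g\} \ge f$ everywhere and they agree at $x$), and likewise $\ge$ that of $g$; hence the sum of outgoing slopes of $f \oplus g$ at $x$ is $\ge$ the sum of outgoing slopes of $f$ at $x \ge 0$. So $x$ is not a pole of $f \oplus g$. I expect this $f(x)=g(x)$ comparison to be the only point needing care; everything else is the termwise additivity/continuity bookkeeping, which I would keep brief.
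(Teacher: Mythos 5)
Your argument is correct and is exactly the verification the paper intends: the paper gives no written proof, merely asserting that the lemma ``holds by the definitions of the two operators,'' and your direct local computation of outgoing slopes (additivity for $\odot$, and for $\oplus$ the comparison that $\max\{f,g\}\ge f$ together with equality at $x$ forces each outgoing slope of $f\oplus g$ to be at least that of $f$) is the standard way to make that assertion precise. Your handling of the equal-values case is the right ``clean way,'' and it in fact covers the strict-inequality cases too, so the proof could be stated uniformly.
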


\begin{lemma}
	\label{lem:metricsemiring}
Let $\Gamma$ be a metric graph.
Then, $\operatorname{Rat}(\Gamma)$ is finitely generated as a $\boldsymbol{T}$-algebra if and only if $\Gamma$ is a singleton.
\end{lemma}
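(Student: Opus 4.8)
The ``if'' direction is immediate: if $\Gamma$ is a singleton then $\operatorname{Rat}(\Gamma) = \boldsymbol{T}$, which is trivially finitely generated as a $\boldsymbol{T}$-algebra. So the content is the ``only if'' direction, and I would prove its contrapositive: if $\Gamma$ is not a singleton, then $\operatorname{Rat}(\Gamma)$ is not finitely generated as a $\boldsymbol{T}$-algebra. The key structural fact is Lemma~\ref{lem:poly}: forming $\odot$ and $\oplus$ of finitely many rational functions (and multiplying by constants from $\boldsymbol{T}$, which contributes no new poles) can only produce functions whose poles lie in the union of the pole sets of the generators. Hence any finitely generated $\boldsymbol{T}$-subalgebra of $\operatorname{Rat}(\Gamma)$ has the property that there is a \emph{finite} set $P \subset \Gamma$ containing all poles of all its elements.

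The plan is therefore to exhibit, for a non-singleton metric graph $\Gamma$, an infinite family of rational functions whose poles are pairwise distinct — equivalently, for every finite $P \subset \Gamma$ a rational function with a pole outside $P$. Since $\Gamma$ is a non-singleton metric graph it contains a segment isometric to a nondegenerate interval, so it has infinitely many points; pick any point $x \in \Gamma \setminus P$ that is not a point at infinity (there are no points at infinity in a metric graph anyway). The chip firing move $\operatorname{CF}(\{x\}, l)$ for small $l>0$ is a rational function whose unique pole is $x$ (its outgoing slopes at $x$ sum to $-\operatorname{val}(x) < 0$, and at every other point the function is either locally affine with slopes summing to zero or locally constant). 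Thus $\operatorname{CF}(\{x\}, l)$ has a pole at $x \notin P$, so it cannot lie in any $\boldsymbol{T}$-subalgebra all of whose elements have poles confined to $P$.

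Putting this together: suppose for contradiction that $\operatorname{Rat}(\Gamma) = \boldsymbol{T}[f_1,\dots,f_n]$ for some $f_i \in \operatorname{Rat}(\Gamma)$, and $\Gamma$ is not a singleton. Let $P$ be the (finite) union of the pole sets of $f_1,\dots,f_n$. By Lemma~\ref{lem:poly} and induction on the number of $\odot,\oplus$ operations, every element of $\boldsymbol{T}[f_1,\dots,f_n]$ has all its poles in $P$ (constants from $\boldsymbol{T}$ have no poles, so they do not enlarge $P$; here one uses that a polynomial expression in the $f_i$ over $\boldsymbol{T}$ is built from the $f_i$ and elements of $\boldsymbol{T}$ by finitely many applications of $\odot$ and $\oplus$). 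But $\Gamma$ has infinitely many points, so there is a point $x \in \Gamma \setminus P$, and $\operatorname{CF}(\{x\}, l) \in \operatorname{Rat}(\Gamma)$ has its unique pole at $x \notin P$, a contradiction. Hence $\operatorname{Rat}(\Gamma)$ is not finitely generated as a $\boldsymbol{T}$-algebra, which proves the claim.

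The only step needing genuine care is the inductive bookkeeping that ``$\boldsymbol{T}$-polynomial expressions in $f_1,\dots,f_n$ have poles only in $P$'': one must observe that a general element of the $\boldsymbol{T}$-algebra generated by the $f_i$ is a finite tropical polynomial, i.e.\ a finite $\oplus$-sum of finite $\odot$-products of the $f_i$ and constants, and then apply Lemma~\ref{lem:poly} repeatedly, noting the constants contribute nothing. This is routine but is the place where the argument could be made sloppily; everything else (infinitude of a non-singleton metric graph, the pole of a one-point chip firing move) is immediate.
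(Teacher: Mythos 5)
Your argument is correct and is exactly the intended expansion of the paper's own (one-line) proof, which simply cites Lemma~\ref{lem:poly} to conclude that a finitely generated $\boldsymbol{T}$-subalgebra has all poles in the finite union of the generators' pole sets, whereas a non-singleton metric graph admits one-point chip firing moves with poles at infinitely many distinct points. The only cosmetic imprecision is that at the two points at distance exactly $l$ from $x$ the slopes of $\operatorname{CF}(\{x\},l)$ sum to $+1$ rather than $0$, but that is a zero, not a pole, so your conclusion that $x$ is the unique pole (for $l$ small) stands.
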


\begin{proof}
The if part is clear.
We shall show the only if part.
If $\operatorname{Rat}(\Gamma)$ is finitely generated as a $\boldsymbol{T}$-algebra, then by Lemma \ref{lem:poly}, $\Gamma$ must be a singleton.
\end{proof}

\begin{prop}
	\label{prop:tropsemiring}
Let $\Gamma$ be a tropical curve.
Then, $\operatorname{Rat}(\Gamma)$ is finitely generated as a $\boldsymbol{T}$-algebra if and only if $\Gamma$ is a singleton.
\end{prop}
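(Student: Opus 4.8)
The plan is to prove the nontrivial (``only if'') direction by essentially the same pole-counting idea that proves Lemma~\ref{lem:metricsemiring}, now carried out on an arbitrary tropical curve. The ``if'' direction is immediate: when $\Gamma$ is a singleton, $\operatorname{Rat}(\Gamma) = \boldsymbol{T}$, which is trivially finitely generated as a $\boldsymbol{T}$-algebra.

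The first step I would isolate is the following consequence of Lemma~\ref{lem:poly}: if $f_1, \ldots, f_k \in \operatorname{Rat}(\Gamma)$ and $S$ is the $\boldsymbol{T}$-subalgebra of $\operatorname{Rat}(\Gamma)$ they generate, then every $f \in S \setminus \{ -\infty \}$ has all of its poles inside the finite set $P := \bigcup_{i=1}^{k} \{ \text{poles of } f_i \}$. To see this, note that such an $f$ is a $\oplus$-sum of monomials $c \odot f_1^{\odot a_1} \odot \cdots \odot f_k^{\odot a_k}$ with $c \in \boldsymbol{T}$ and $a_i \in \boldsymbol{Z}_{\ge 0}$; any monomial equal to $-\infty$ may be discarded, a real constant $c$ contributes no pole and $\odot$ by it leaves all slopes unchanged, each $f_i^{\odot a_i}$ has the same poles as $f_i$ when $a_i \ge 1$ and none when $a_i = 0$, and then Lemma~\ref{lem:poly} applied inductively to the remaining $\odot$- and $\oplus$-operations confines the poles of $f$ to $P$. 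Since each $f_i$ is piecewise affine with finitely many pieces, $P$ is finite.

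The second step is to produce, on any non-singleton tropical curve $\Gamma$, rational functions realizing infinitely many distinct poles. Choose an edge $e$ of $\Gamma$; its relative interior $e^{\circ}$ is infinite and consists of finite points (points at infinity are leaf ends, hence never relative-interior points of edges). For each $x \in e^{\circ}$ and any $\ell \in \boldsymbol{R}_{>0}$ the chip firing move $\operatorname{CF}(\{ x \}, \ell) \in \operatorname{Rat}(\Gamma)$ agrees near $x$ with $-\operatorname{dist}(x, \cdot)$, so it has outgoing slope $1$ in each of the two directions at $x$; hence the sum of its outgoing slopes at $x$ is $-2 < 0$, i.e.\ $x$ is a pole of $\operatorname{CF}(\{ x \}, \ell)$. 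Therefore, if $\operatorname{Rat}(\Gamma)$ were finitely generated as a $\boldsymbol{T}$-algebra by some $f_1, \ldots, f_k$, then by the first step every element of $\operatorname{Rat}(\Gamma)$ would have its poles in the finite set $P$, contradicting the fact that the functions $\operatorname{CF}(\{ x \}, 1)$ with $x \in e^{\circ}$ collectively have infinitely many poles. Hence $\Gamma$ is a singleton, which completes the ``only if'' direction.

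I do not foresee a genuine obstacle; the only delicate point is the bookkeeping in the first step, namely checking that passing to $\boldsymbol{T}$-polynomial expressions never creates a pole outside $P$, with the $-\infty$ element handled correctly. As an alternative one could instead observe that restriction along the inclusion $\iota(\Gamma^{\prime}) \hookrightarrow \Gamma$ of the contracted curve $\Gamma^{\prime}$ from the proof of Lemma~\ref{lem:troptometric} is a surjective $\boldsymbol{T}$-algebra homomorphism $\operatorname{Rat}(\Gamma) \to \operatorname{Rat}(\Gamma^{\prime})$, so that finite generation of $\operatorname{Rat}(\Gamma)$ would force that of $\operatorname{Rat}(\Gamma^{\prime})$ and hence, by Lemma~\ref{lem:metricsemiring}, that $\Gamma^{\prime}$ is a singleton; but one would then still have to rule out $\Gamma$ being a star of finitely many infinite rays, again by the pole argument above. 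So the direct route is the shorter one.
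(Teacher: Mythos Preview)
Your argument is correct. One cosmetic slip: the outgoing slopes of $-\operatorname{dist}(x,\cdot)$ at $x$ are each $-1$, not $+1$; you nonetheless write the correct sum $-2<0$, so the conclusion that $x$ is a pole stands.

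Your route differs from the paper's. The paper does not rerun the pole-counting argument on an arbitrary tropical curve; instead it argues that the restriction map $\operatorname{Rat}(\Gamma)\to\operatorname{Rat}(\Gamma')$, with $\Gamma'$ the metric graph obtained from $\Gamma$ by contracting all edges of length $\infty$, is a surjective $\boldsymbol{T}$-algebra homomorphism, so finite generation of $\operatorname{Rat}(\Gamma)$ descends to $\operatorname{Rat}(\Gamma')$, whence $\Gamma'$ is a singleton by Lemma~\ref{lem:metricsemiring}. This is exactly the alternative you sketch at the end, and your reservation about it is well taken: the reduction as written only forces $\Gamma'$ to be a singleton and does not by itself exclude a non-singleton $\Gamma$ consisting of infinite rays glued at one finite point. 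Your direct approach handles every non-singleton $\Gamma$ uniformly, since the relative interior of any edge, finite or infinite, already supplies infinitely many finite points at which chip firing moves have poles; it is therefore shorter and closes that residual case without extra work.
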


\begin{proof}
By Lemma \ref{lem:metricsemiring}, it is enough to show that with \textit{any} metric graph $\Gamma^{\prime}$ obtained from $\Gamma$ by contracting edges of length $\infty$, if $\operatorname{Rat}(\Gamma)$ is finitely generated as a $\boldsymbol{T}$-algebra, then so is $\operatorname{Rat}(\Gamma^{\prime})$.
Assume that $\operatorname{Rat}(\Gamma)$ is finitely generated as a $\boldsymbol{T}$-algebra.
Let $\{ f_1, \ldots, f_n \}$ be a finite generating set of $\operatorname{Rat}(\Gamma)$.
Then the set of restrictions $\{ f_1|_{\Gamma^{\prime}}, \ldots, f_n|_{\Gamma^{\prime}} \}$ is a finite generating set of $\operatorname{Rat}(\Gamma^{\prime})$ with the natural inclusion $\Gamma^{\prime} \hookrightarrow \Gamma$.
In fact, the restriction map $\operatorname{Rat}(\Gamma) \to \operatorname{Rat}(\Gamma^{\prime})$ is surjective since the contraction $\Gamma \twoheadrightarrow \Gamma^{\prime}$ contracts only trees.
Hence, we have the assertion.
\end{proof}

Finally, we shall show Proposition \ref{prop:morphismsemiring}:

\begin{proof}[Proof of Proposition \ref{prop:morphismsemiring}]
Fix loopless models $(G, l)$, $(G^{\prime}, l^{\prime})$ for $\Gamma$, $\Gamma^{\prime}$, respectively, such that $\varphi(V(G)) = V(G^{\prime})$.
For any edge $e$ of $G$, if $e$ is not incident to a point at infinity, then let $F_e := \operatorname{CF}(\Gamma \setminus e^{\circ}, l(e)/2)$; otherwise, let $F_e := \operatorname{CF}(\overline{\Gamma \setminus e}, \infty)$.

Assume that $l(e) < \infty$.
Let $v$ be one of the vertices incident to $e$.
Let $G_{v, e}$ be a rational function on $\Gamma$ which
has slope one from $v$ to the midpoint of $e$;
has a sufficiently large positive integer to be its slope from $v$ to a point of each edge incident to $v$ other than $e$;
is the constant zero function on other points;
has $v$ as its unique point where attains the minimum value $-\frac{l(e)}{2}$.
Let $x \in e^{\circ}$.
Assume that $\operatorname{dist}(x, v) = l_x \le \frac{l(e)}{2}$.
If $\operatorname{dist}(x, v) = \frac{l(e)}{2}$, then we have
\begin{eqnarray*}
&&\operatorname{CF}(\{ x \}, l_x) \odot \{ -(\operatorname{deg}_e(\varphi) - 1)l_x\}\\
&=& \varphi^{\ast}(\operatorname{CF}(\{ \varphi(x) \}, l_{\varphi(x)})) \odot F_e^{\odot (\operatorname{deg}_e(\varphi) - 1)}
\odot \bigodot_{\substack{e_1 \in E(G) \setminus \{e \}:\\ e_1 \subset \varphi^{-1}(\varphi(e))}} F_{e_1}^{\odot \operatorname{deg}_{e_1}(\varphi)}.
\end{eqnarray*}
Suppose $\operatorname{dist}(x, v) \le \frac{l(e)}{4}$.
Let $f^{\prime}$ be a rational function on $\Gamma^{\prime}$ which coincides with $\operatorname{CF}(\{ \varphi(x) \}, l_{\varphi(x)})$ on $U^{\prime} := \{ x^{\prime} \in \Gamma^{\prime} \,|\, \operatorname{dist}(\varphi(x), x^{\prime}) \le l_{\varphi(x)}\}$;
has a sufficiently small negative slope $s$ from $U^{\prime}$ on the $\varepsilon$-neighborhood of $U^{\prime}$ with a sufficiently small positive real number $\varepsilon$ enough to be the restriction of $G_{v, e}$ on the inverse image of the $\varepsilon$-neighborhood of $U^{\prime}$ does not take zero;
is the constant $- l_{\varphi(x)} + s\varepsilon$ on other points.
Then, there exists a positive integer $b$ such that
\begin{eqnarray*}
&& \operatorname{CF}(\{ x \}, l_x) \odot \{ - (\operatorname{deg}_{e}(\varphi) - 1) l_x \} \\
&=& \varphi^{\ast}(f^{\prime}) \odot
\left\{ F_e \oplus G_{v, e} \odot \left( \frac{l(e)}{2} - 2l_x \right) \right\}^{\odot (\operatorname{deg}_e(\varphi) - 1)}\\
&& \odot \bigodot_{\substack{e_1 \in E(G) \setminus \{e \}:\\ e_1 \subset \varphi^{-1}(\varphi(e)), v \in e_1}} F_{e_1}^{\odot b} \odot \bigodot_{\substack{e_2 \in E(G) \setminus \{ e \}, v_2 \in V(G) \setminus \{ v \}:\\ e_2 \subset \varphi^{-1}(\varphi(e)), v_2 \in \varphi^{-1}(\varphi(v)),\\ v \not\in e_2, v_2 \in e_2}} G_{v_2, e_2}^{\odot b}\\
&& \oplus (- \operatorname{deg}_e(\varphi)l_x).
\end{eqnarray*}
Suppose $\frac{l(e)}{4} < \operatorname{dist}(x, v) < \frac{l(e)}{2}$.
Then, we have

\begin{eqnarray*}
&& \operatorname{CF} \left( \{ x \}, \frac{l(e)}{2} - l_x \right) \odot \{ - (\operatorname{deg}_{e}(\varphi) - 1) l_x \} \\
&=& \varphi^{\ast}(\operatorname{CF}(\{ \varphi(x) \}, l_{\varphi(x)}) \odot
\left\{ F_e \oplus G_{v, e} \odot \left( \frac{l(e)}{2} - 2l_x \right) \right\}^{\odot (\operatorname{deg}_e(\varphi) - 1)}\\
&& \odot \bigodot_{\substack{e_1 \in E(G) \setminus \{ e \}:\\ e_1 \subset \varphi^{-1}(\varphi(e))}} F_{e_1}^{\odot \operatorname{deg}_{e_1}(\varphi)} \oplus \left\{ -( \operatorname{deg}_e(\varphi) - 1) l_x - \left( \frac{l(e)}{2} - l_x \right) \right\}.
\end{eqnarray*}
Since
\begin{eqnarray*}
&&\operatorname{CF} \left( \left\{ x_1 \in \Gamma \,|\, \operatorname{dist}(x, x_1) \le \frac{l(e)}{4} - \frac{l_x}{2} \right\}, \frac{l(e)}{4} - \frac{l_x}{2} \right)\\
&=& \operatorname{CF} \left( \{ x \}, \frac{l(e)}{4} - \frac{l_x}{2} \right) \odot \bigodot_{\substack{x_1 \in e:\\ \operatorname{dist}(x, x_1) = \frac{l(e)}{4} - \frac{l_x}{2}}} \left\{ \operatorname{CF} \left( \{ x_1 \}, \frac{l(e)}{4} - \frac{l_x}{2} \right) \odot \left( \frac{l(e)}{4} - \frac{l_x}{2} \right) \right\},
\end{eqnarray*}
\begin{eqnarray*}
&&\operatorname{CF} \left( \left\{ x_1 \in \Gamma \,|\, \operatorname{dist}(x, x_1) \le \frac{l(e)}{2} - l_x \right\}, \frac{l(e)}{4} - \frac{l_x}{2} \right)\\
&=& \operatorname{CF} \left( \left\{ x_1 \in \Gamma \,|\, \operatorname{dist}(x, x_1) \le \frac{l(e)}{4} - \frac{l_x}{2} \right\}, \frac{l(e)}{4} - \frac{l_x}{2} \right)\\
&& \odot \bigodot_{\substack{x_1 \in e:\\ \operatorname{dist}(x, x_1) = \frac{l(e)}{2} - l_x}} \left\{ \operatorname{CF} \left( \{ x_1 \}, \frac{l(e)}{4} - \frac{l_x}{2} \right) \odot \left( \frac{l(e)}{4} - \frac{l_x}{2} \right) \right\},
\end{eqnarray*}
and $0 < 3 \left( \frac{l(e)}{4} - \frac{l_x}{2} \right) < \frac{l(e)}{2}$, with inputs $l = \frac{l(e)}{4} - \frac{l_x}{2}$, $S_1 = \{ x_1 \in \Gamma \,|\, \operatorname{dist}(x, x_1) \le l(e) / 2 - l_x \}$ in Remark \ref{rem:connection}, we can show that $\operatorname{CF}(\{ x \}, l_x)$ is generated by $F_e, G_{v, e}$ as a $\varphi^{\ast}(\operatorname{Rat}(\Gamma^{\prime}))$-algebra.

Assume that $l(e) = \infty$.
Identify $e = [0, \infty]$.
Let $x, y \in e$ be any distinct points such that $x < y$.
Let $g_{[x, y]}$ be the rational function on $\Gamma$ which has slope one in the direction from $x$ to $y$, is constant on any other points, and whose minimum value is zero.
Identify $\varphi(e) = [0, \infty]$.
Let $g^{\prime}_{[\varphi(x), \varphi(y)]}$ be the rational function on $\Gamma^{\prime}$ which has slope one in the direction from $\varphi(x)$ to $\varphi(y)$, is constant on any other points, and whose minimum value is zero.
Let $v$ be the endpoint $0$ of $e$.
Let $l_x$ (resp. $l_y$) be the length of $[v, x]$ (resp. $[v, y]$).
Then, we have
\[
g_{[x, y]} = g_{[v, y]} \odot (-l_x) \oplus 0
\]
and
\[
g_{[v, y]} = \varphi^{\ast} \left( g^{\prime}_{[\varphi(v), \varphi(y)]} \right) \odot \operatorname{CF}(\{ v \}, l_y)^{\odot (\operatorname{deg}_e(\varphi) - 1)} \odot \bigodot_{\substack{e_1 \in E(G) \setminus \{ e \}:\\ e_1 \subset \varphi^{-1}(\varphi(e))}} F_{e_1}^{\operatorname{deg}_{e_1}(\varphi)} \oplus 0.
\]
Let $w$ be the endpoint $\infty$ of $e$.
Then, we have
\[
g_{[x, y]}^{\odot (-1)} = g_{[x, w]}^{\odot (-1)} \oplus (-\operatorname{dist}(x, y))
\]
and
\[
g_{[x, w]}^{\odot (-1)} = F_e \odot g_{[v, x]}.
\]
Let $\Gamma_1$ be the metric graph obtained from $\Gamma$ by contracting all edges of $G$ of length $\infty$ to the finite endpoints.
We regard that $\Gamma_1$ is a subgraph of $\Gamma$.
By the proof of Theorem \ref{thm1}, for any rational function $f \in \operatorname{Rat}(\Gamma)$, the restriction $f|_{\Gamma_1}$ is generated by (the restrictions on $\Gamma_1$ of elements of) $\{ \operatorname{CF}(\{ x \}, l_x), \operatorname{CF}(\{ v \}, \infty) \,|\, x \in \Gamma_1 \setminus V(G), v \in V(G) \setminus \Gamma_{\infty} \}$ as a $\boldsymbol{T}$-algebra.
Let $g$ be the generated rational function such that $f|_{\Gamma_1} = g|_{\Gamma_1}$.
By (tropical) multiplying rational functions of the forms of $g_{[x, y]}^{\odot (\pm 1)}$ above to $g$, $f$ is made of $g$.
Hence $\{ F_e, G_{v_1, e_1}, \operatorname{CF}( \{v \}, \infty) \,|\, e \in E(G), v, v_1 \in V(G) \setminus \Gamma_{\infty}, e_1 \in E(G) \setminus E_{\infty}, v_1 \in e_1 \}$ generates $\operatorname{Rat}(\Gamma)$ as a $\varphi^{\ast}(\operatorname{Rat}(\Gamma^{\prime}))$-algebra, where $E_{\infty}$ denotes the subset of $E(G)$ consisting of all edges of length $\infty$.
\end{proof}

By Example \ref{ex2}, we know that $\operatorname{Rat}(\Gamma)$ may not be finitely generated as a $\varphi^{\ast}(\operatorname{Rat}(\Gamma^{\prime}))$-module.

\begin{ex}
	\label{ex2}
\upshape{
Let $\Gamma := [0,2]$ and $\Gamma^{\prime} := [0,1]$.
The map $\varphi : \Gamma \to \Gamma^{\prime}; x \mapsto x \text{ when $0 \le x \le 1$}; x \mapsto 2 - x \text{ when $1 < x \le 2$}$ is a finite harmonic morphism of degree two.
Assume that $\operatorname{Rat}(\Gamma)$ is finitely generated as a $\varphi^{\ast}(\operatorname{Rat}(\Gamma^{\prime}))$-module, i.e., there exist $f_1, \ldots, f_n \in \operatorname{Rat}(\Gamma) \setminus \{ -\infty \}$ such that $\operatorname{Rat}(\Gamma) = \bigoplus_{i = 1}^n \varphi^{\ast}(\operatorname{Rat}(\Gamma^{\prime})) \odot f_i$.
Let $x^{\prime}$ be a point of $[0, 1) \subset \Gamma^{\prime}$.
Then, for any pair of values $a, b \in \boldsymbol{R}_{>0}$, $\operatorname{Rat}(\Gamma)$ contains a rational function which has $a, b$ as values at each element $x_1, x_2$ of $\varphi^{-1}(x^{\prime})$.
For example, consider $(\operatorname{CF}(\{ x_1 \}, \varepsilon)^{\odot p} \odot a \oplus 0) \odot (\operatorname{CF}(\{ x_2 \}, \varepsilon)^{\odot q} \odot b \oplus 0) \in \operatorname{Rat}(\Gamma)$ with a small positive number $\varepsilon > 0$ and some $p, q \in \boldsymbol{Z}_{>0}$ such that $p \varepsilon > a$ and $q \varepsilon > b$.
On the other hand, for any $i$ and $f^{\prime} \in \operatorname{Rat}(\Gamma^{\prime})$ such that $(\varphi^{\ast}(f^{\prime}) \odot f_i)(x_1) = a$, since $a - f_i(x_1) = \varphi^{\ast}(f^{\prime})(x_1) = \varphi^{\ast}(f^{\prime})(x_2)$ hold, $(\varphi^{\ast}(f^{\prime}) \odot f_i)(x_2) = a - f_i(x_1) + f_i(x_2)$ holds.
Hence, if for any $j$, $b \not= a - f_j(x_1) + f_j(x_2)$ holds, then $\bigoplus_{i = 1}^n \varphi^{\ast}(\operatorname{Rat}(\Gamma^{\prime})) \odot f_i$ contains no rational functions which take $a, b$ at $x_1, x_2$ respectively.
It is a contradiction.
Therefore, $\operatorname{Rat}(\Gamma)$ is not finitely generated as a $\varphi^{\ast}(\operatorname{Rat}(\Gamma^{\prime}))$-module.
}
\end{ex}

\end{document}